\let\cl@chapter\undefined
\pgfplotsset{width=7cm,compat=1.8}
\newcommand{\ttg}{\texttt{g}}
\newcommand{\ttu}{\texttt{u}}
\newcommand{\tty}{\mathtt{y}}
\newcommand{\ttf}{\mathtt{f}}
\newcommand{\ttK}{\mathtt{K}}
\newcommand{\K}{\hat{k}}
\newcommand{\eps}{\varepsilon}
\def\Th{\mathcal{T\!}_h}
\def\Ths{\mathcal{T\!}_{h'}}
\def\ATh{\mathcal{AT\!}_{h}}
\newcommand{\bfn}{\mathbf{n}}
\newcommand{\CC}{\mathbb{C}}
\newcommand{\NN}{\mathbb{N}}
\newcommand{\RR}{\mathbb{R}}
\newcommand{\cH}{\mathcal{H}}
\newcommand{\cL}{\mathcal{L}}
\newcommand{\cO}{\mathcal{O}}
\newcommand{\cP}{\mathcal{P}}
\renewcommand{\d}{\operatorname{d}\!}
\DeclareMathOperator*{\argmin}{arg\,min}
\DeclareMathOperator\acosh{acosh}
\newcommand{\bin}{\operatorname{bin}}
\newcommand{\isdef}{\mathrel{\mathrel{\mathop:}=}}
\newcommand{\defis}{\mathrel{=\mathrel{\mathop:}}}
\DeclareMathOperator*{\argmax}{arg\,max}
\title{A fast and oblivious matrix compression algorithm for Volterra integral operators}
\author{J.~D\"olz \and H.~Egger \and V.~Shashkov}
\institute{
J.~D\"olz \at
Institute for Numerical Simulation,
University of Bonn,
Friedrich-Hirzebruch-Allee 7,
53115 Bonn,
Germany.
\email{doelz@ins.uni-bonn.de}
\and
H.~Egger \at
Numerical Analysis and Scientific Computing,
Department of Mathematics, 
TU Darmstadt, Dolivostr.~15,
64293 Darmstadt,
Germany.
\email{egger@mathematik.tu-darmstadt.de}
\and
V.~Shashkov \at
Numerical Analysis and Scientific Computing,
Department of Mathematics, 
TU Darmstadt, Dolivostr.~15,
64293 Darmstadt,
Germany.
\email{shashkov@mathematik.tu-darmstadt.de}
}
\begin{document}

\maketitle

\begin{abstract}
    The numerical solution of dynamical systems with memory requires the efficient evaluation of Volterra integral operators in an evolutionary manner. After appropriate discretisation, the basic problem can be represented as a matrix-vector product with a lower diagonal but densely populated matrix. For typical applications, like fractional diffusion or large scale dynamical systems with delay, the memory cost for storing the matrix approximations and complete history of the data then becomes prohibitive for an accurate numerical approximation.
    For Volterra-integral operators of convolution type, the \emph{fast and oblivious convolution quadrature} method of Sch\"adle, Lopez-Fernandez, and Lubich resolves this issue and allows to compute the discretized evaluation with $N$ time steps in $O(N \log N)$ complexity and only requires $O(\log N)$ active memory to store a compressed version of the complete history of the data. 
    We will show that this algorithm can be interpreted as an $\cH$-matrix approximation of the underlying integral operator. A further improvement can thus be achieved, in principle, by resorting to $\cH^2$-matrix compression techniques.  
    Following this idea, we formulate a variant of the $\cH^2$-matrix vector product for discretized Volterra integral operators that can be performed in an evolutionary and oblivious manner and requires only $O(N)$ operations and $O(\log N)$ active memory. In addition to the acceleration, more general asymptotically smooth kernels can be treated and the algorithm does not require a-priori knowledge of the number of time steps.
    The efficiency of the proposed method is demonstrated by application to some typical test problems.
    \keywords{Volterra integral operators \and convolution quadrature \and $\cH^2$-matrices \and matrix compression}
\end{abstract}

\section{Introduction}

We study the numerical solution of dynamical systems with memory which can be modelled by abstract Volterra integro-differential equations of the form
\begin{align}\label{eq:integrodiff}
\alpha(t) y'(t) + A(t)y(t) = \int_0^t k(t,s) f(s,y(s)) \d s, \qquad 0 \le t \le T.
\end{align}
Such problems arise in a variety of applications, e.g., in anomalous diffusion \cite{MK2000}, neural sciences \cite{Ama1977}, transparent boundary conditions \cite{AGH2000,Hag1999,JG2004,JG2008}, wave propagation \cite{AGH2000,DES2020,Hag1999}, field circuit coupling \cite{ESS2020} and many more, see also \cite{Bru2004,Bru2017a,Lub2004,Say2016} and the references therein.
The simplest model problem which already shares the essential difficulties stemming from non-locality of the right hand side in \eqref{eq:integrodiff} is the evaluation of the integral operator
\begin{align} \label{eq:integral}
    y(t) = \int_0^t k(t,s) f(s) \, \d s, \qquad 0 \le t \le T,
\end{align}
with kernel function $k$, data $f$, and result function $y$.
Let us emphasize that, in order to allow the application in the context of integro-differential problems \eqref{eq:integrodiff}, the parameter-dependent integrals \eqref{eq:integral} have to be evaluated in an \emph{evolutionary} manner, i.e., for successively increasing time. 
The results obtained for \eqref{eq:integral} then quite naturally extend to \eqref{eq:integrodiff}. We hence focus on the evaluation of Volterra integral operators \eqref{eq:integral} in the following and return to more general problems in \Cref{sec:numerics}.

\subsection{Discretisation and related work}
After applying some appropriate discretisation procedure, see \cite{Bru2004} for a survey, problem \eqref{eq:integral} can be phrased as a simple matrix-vector multiplication 
\begin{align} \label{eq:algebraic}
\tty_n = (\ttK \ttf)_n, \qquad 1 \le n \le N.
\end{align}
The evolutionary character and the nonlocal interactions are reflected by the fact that the matrix $\ttK \in \RR^{N \times N}$ is lower block triangular but densely populated. 
The straight-forward computation of the result vector $\tty$ requires $O(N^2)$ algebraic operations. 
The evolutionary character of problem \eqref{eq:integral} can be preserved by computing the entries $\tty_n$ for $n=1,\ldots,N$ recursively, i.e., by traversing the matrix $\ttK$ from top to bottom.
If, on the other hand, the matrix $\ttK$ is traversed from left to right, then the algorithm becomes \emph{oblivious}, i.e., the data $\ttf_n$ is only required in the $n$th step of the algorithm, 
but the execution of \eqref{eq:algebraic} then requires $O(N)$ active memory to store the partial sums for every row. 
Although the evaluation can then still be organized in an evolutionary manner, see \Cref{sec:uniform}, the number of time steps $N$ needs to be fixed \emph{a-priori} in order to store the intermediate results.

For the particular case that the integral kernel in \eqref{eq:integral} is of convolution type
\begin{align} \label{eq:convolution_kernel} 
k(t,s) = k(t-s),
\end{align}
a careful discretisation of \eqref{eq:volterra} gives rise to an algebraic system \eqref{eq:algebraic} with block Toeplitz matrix $\ttK$, and the discrete solution $\tty$ can be computed in $O(N \log N)$ operations using fast Fourier transforms. 
As shown in \cite{HLS1985}, an evolutionary version of the matrix vector product can be realized in $O(N \log^2 N)$ complexity and requiring $O(N)$ active memory.
The convolution quadrature methods of \cite{Lub1988,Lub1988a,LO1993} treat the case that only the Laplace transform $\K(s)$ of the convolution kernel \eqref{eq:convolution_kernel} is available.
The \emph{fast and oblivious convolution quadrature} method introduced in \cite{LPS2006,LS2002} allows the efficient evaluation of Volterra integrals with convolution kernel in an \emph{evolutionary} and \emph{oblivious} manner with $O(N \log N)$ operations and only $O(\log N)$ active memory and $O(\log N)$ evaluations of the Laplace transform $\widehat k(s)$. This method is close to optimal concerning complexity and memory requirements and has been applied successfully to the numerical solution of partial differential equations with transparent boundary conditions \cite{Hag1999}, the efficient realization of boundary element methods for the wave equation \cite{Say2016} or fractional diffusion \cite{CLP2006}.

For integral operators \eqref{eq:integral} with general kernels $k(t,s)$, the above mentioned methods cannot be applied directly.
Alternative approaches, like the fast multipole method \cite{FD2009,GR1987,Rok1985}, the panel clustering technique \cite{HN1989},  $\cH$- and $\cH^2$-matrices \cite{Bor2010,Hac1999}, multilevel techniques \cite{BL1990,Gie2001} or wavelet algorithms \cite{DPS1993},  which were developed and applied successfully in the context of molecular dynamics and boundary integral equations, are however still applicable.
These methods are based on certain hierarchical approximations for the kernel functions $k(t,s)$, whose error can be controlled
under appropriate smoothness assumptions, e.g. if the kernel is \emph{asymptotically smooth}; see \eqref{eq:smooth} for details.
If the data $f$ is independent of the solution $y$, the numerical evaluation of the Volterra-integral operator \eqref{eq:integral} can then be realized with $O(N \log^\alpha \!N)$ computational cost with some $\alpha \ge 0$ and $N$ again denoting the dimension of the underlying discretisation.
Moreover, data-sparse approximations of the matrix $\ttK$ for asymptotically smooth kernels $k(t,s)$ can be stored efficiently with only $O(N \log^\alpha \!N)$ memory and for convolution kernels $k(t-s)$ even with $O(\log N)$ memory; we refer to \cite{Bor2010,Hac2015} for details and an extensive list of references.
Unfortunately, the algorithms mentioned in literature are not evolutionary and, therefore, cannot be applied to more complex problems like \eqref{eq:integrodiff} directly.

\subsection{A fast and oblivious evolutionary algorithm}

In this paper, we propose an algorithm for the efficient evaluation of Volterra integrals \eqref{eq:integral} 
or corresponding matrix-vector products \eqref{eq:algebraic} which shares the benefits and overcomes the drawbacks of the approaches mentioned above, i.e., it is
\begin{itemize}
\item \emph{evolutionary}: the approximations $\tty_n$ can be computed one after another and the number of time steps $N$ does not need to be known in advance,
\item \emph{oblivious}: the entry $\ttf_n$ of the right hand side is only required in the $n$th step,
\item \emph{fast}: the evaluation of all $\tty_n$, $1 \le n \le N$ requires only $\cO(N)$ operations, and
\item \emph{memory efficient}: the storage of the convolution matrix requires only $\cO(N)$ memory for general and $\cO(\log N)$ memory for convolution kernels. The matrix entities can also be computed on the fly, such that only $\cO(\log N )$ storage is required to store a compressed history of the data $f$.
\end{itemize}
Our strategy is based on the ideas of polynomial-based $\cH^2$-compression algorithms for finding hierarchical low-rank approximations of the kernel function $k(t,s)$ leading to a block-structured hierarchical approximation of the matrix $\ttK$ in \eqref{eq:algebraic}.
The accuracy of the underlying approximation can thus be guaranteed by well-known approximation results; see \cite{Bor2010,Hac2015} for instance.
A key ingredient for our considerations is the one-dimensional nature of the integration domain which allows to characterize the block structure of the approximating hierarchical matrix explicitly.
This allows us to formulate an algorithm which traverses the compressed matrix $\ttK$ from top-to-bottom in accordance with the evolutionary structure of the underlying problem.
The hierarchical approximation of the convolution kernel also yields a compression strategy for the history of the data $f$. 
In this sense, our algorithm can be considered a generalisation of \cite{AGH2000,BH2017a,JG2004,JG2008}, where a fast multipole expansion was employed to accelerate the \emph{sum of exponentials approach}, or to \cite{KLR1996}, where a polynomial on growing time steps was employed for the compression of the data, as well as to \cite{KG2021}, where an evolutionary 
$\cH$-matrix approximation with a special low-rank structure was constructed.
As a further result, we show that our algorithm seamlessly integrates into the convolution quadrature framework of \cite{Lub1988,Lub1988a}, when the kernel $k(t-s)$ is of convolution type and only accessible via its Laplace transform. 
In analogy to the treatment of nearfield contributions in the fast boundary element method, we utilize standard convolution quadrature to compute the entries of the convolution matrix close to the diagonal, 
while numerical inverse Laplace transforms \cite{DW2015} are used to setup an $\cH^2$-approximation of the remaining off-diagonal parts of the convolution matrix in the time domain. 
This approach has some strong similarities to the fast and oblivious convolution quadrature method \cite{LS2002,SLL2006}, but we will reveal some important differences. In particular, we illustrate that the methods of \cite{LS2002,SLL2006} can actually be interpreted as $\cH$-matrix approximations with a particular organisation of the matrix-vector product in \eqref{eq:algebraic}, which shows that the $\cO(N\log N)$ complexity cannot be further improved. Moreover, the convolution matrix must be applied from left to right to allow for an oblivious evaluation and the number of time steps $N$ must be known in advance.
In contrast to that, our new algorithm is based on an $\cH^2$-approximation of the matrix $\ttK$ and the evolutionary, fast and oblivious evaluation of the matrix-vector product can be realized by traversing through the matrix from top to bottom in $\cO(N)$ complexity and without needing to know the number of time steps $N$ in advance.
Finally, our algorithm naturally extends to general integral kernels $k(t,s)$ increasing the field of applications substantially.

\subsection{Outline}
In \Cref{sec:time} we recall some general approximation results, introduce our basic notation, and state a slightly modified algorithm for the dense evaluation of the Volterra integral operators to illustrate some basic principles that we exploit later on.
\Cref{sec:fo} is concerned with a geometric partitioning on the domain of integration, the multilevel hierarchy used for the $\cH^2$-compression, and the description and analysis of our new algorithm.
In \Cref{sec:frequency} we consider convolution kernels $\K(s)$ and discuss the relation of our algorithm to Lubich's convolution quadrature and the connections to the fast and oblivious algorithm of \cite{LS2002,SLL2006}. 
To support our theoretical considerations, some numerical results are provided in \Cref{sec:numerics}.

\vfill 

\pagebreak

\section{Preliminary results} \label{sec:time}

Let us start with summarizing some basic results about typical discretisation strategies for Volterra integral operators 
\begin{align} \label{eq:volterra}
    y(t) = \int_0^t k(t,s) f(s) ds
\end{align}
which are the basis for the efficient and reliable numerical evaluation later on.
For simplicity, all functions $y$, $f$, and $k$ are assumed to be scalar valued.
We will demonstrate the application to more general problems of the form \eqref{eq:integrodiff} in \Cref{sec:numerics}.

\subsection{A general approximation result} \label{sec:general}

For the discretisation of the integral operator \eqref{eq:volterra}, we consider methods of the form 
\begin{align} \label{eq:abstract}
    \widetilde y_h(t) = \int_0^t  k_h(t,s) f_h(s) ds,
\end{align}
where $k_h$ and $f_h$ are suitable approximations for $k$ and $f$. 
The subscript $h$ will be used to designate approximations throughout.
The following result may serve as a theoretical justification for a wide variety of particular discretisation schemes.

\begin{lemma} \label{lem:abstract}
Let $T>0$, kernels $k,k_h \in L^\infty(0,T;L^r(0,T))$, and $f,f_h \in L^{r'}(0,T)$ be given with $1 \le r,r' \le \infty$ with $1/r+1/r'=1$.
Further assume that 
\begin{align} \label{eq:dataerror}
\|k - k_h\|_{L^\infty(0,T;L^r(0,T))} \le \eps
\qquad \text{and} \qquad 
\|f - f_h\|_{L^{r'}(0,T)} \le \eps. 
\end{align}
Then the functions $y$, $\widetilde y_h$ defined by \eqref{eq:volterra} and \eqref{eq:abstract} satisfy
\begin{align} \label{eq:approximation}
\|y - \widetilde y_h\|_{L^\infty(0,T)} \le C (\|k\|_{L^r(0,T)} + \|f\|_{L^{r'}(0,T)} + \eps) \, \eps,
\end{align}
i.e., the error in the results can be bounded uniformly by the perturbation in the data.
\end{lemma}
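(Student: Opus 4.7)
The plan is a straightforward add-and-subtract stability estimate combined with H\"older's inequality. I would first decompose, pointwise in $t \in [0,T]$,
\[
y(t) - \widetilde y_h(t) = \int_0^t \bigl(k(t,s) - k_h(t,s)\bigr) f(s) \d s + \int_0^t k_h(t,s) \bigl(f(s) - f_h(s)\bigr) \d s,
\]
which is a telescoping identity isolating the two sources of perturbation (kernel error and data error).

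Next, I would apply H\"older's inequality in $s$ with conjugate exponents $r$ and $r'$ to each summand. The first integral is bounded by $\|k(t,\cdot) - k_h(t,\cdot)\|_{L^r(0,T)} \|f\|_{L^{r'}(0,T)}$, and the hypothesis \eqref{eq:dataerror} on $k-k_h$ reduces this to $\eps \|f\|_{L^{r'}(0,T)}$. The second is at most $\|k_h(t,\cdot)\|_{L^r(0,T)} \|f - f_h\|_{L^{r'}(0,T)}$, where the hypothesis on $f-f_h$ contributes one factor of $\eps$, and a triangle inequality on $k_h = k + (k_h - k)$ combined with \eqref{eq:dataerror} yields $\|k_h(t,\cdot)\|_{L^r} \le \|k(t,\cdot)\|_{L^r} + \eps$. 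Altogether this summand is bounded by $\bigl(\|k(t,\cdot)\|_{L^r} + \eps\bigr)\eps$.

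Taking the essential supremum over $t \in (0,T)$ and collecting the two contributions then yields an estimate of the form claimed in \eqref{eq:approximation}, with constant $C = 1$.

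I do not expect any substantive obstacle: this is the classical stability estimate for a linear integral operator under simultaneous perturbation of kernel and data. The only mild subtlety is that the H\"older step produces the $L^\infty(0,T;L^r(0,T))$ norm of $k$ rather than the $L^1(0,T;L^r(0,T))$ norm appearing in the displayed bound; both are finite under the stated hypotheses, and the $L^\infty$ version is the one that naturally fits the argument and matches the assumption on $k - k_h$.
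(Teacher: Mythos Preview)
Your proposal is correct and essentially identical to the paper's proof: the paper uses the mirror-image telescoping $k(f-f_h)+(k-k_h)f_h$ and then bounds $\|f_h\|\le\|f\|+\eps$, whereas you use $(k-k_h)f+k_h(f-f_h)$ and bound $\|k_h(t,\cdot)\|\le\|k(t,\cdot)\|+\eps$; both are the same H\"older-plus-triangle argument. Your remark about the norm is also on point---the paper's own proof, like yours, produces $\|k\|_{L^\infty(0,T;L^r)}$ rather than the $L^1$ norm appearing in the displayed bound.
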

\begin{proof}
From H\"older's inequality, we can deduce that 
\begin{align*}
|y(t) &- \widetilde y_h(t)| 
 \le \int_0^t |k(t,s)| |f(s) - f_h(s)|  + |k(t,s) - k_h(t,s)| |f_h(s)| ds \\
&\le \|k(t,\cdot)\|_{L^r(0,T)} \|f - f_h\|_{L^{r'}(0,T)} + \|k(t,\cdot) - k_h(t,\cdot)\|_{L^r(0,T)} \|f_h\|_{L^{r'}(0,T)}.
\end{align*}
The result then follows by estimating $\|f_h\| \le \|f\| + \|f - f_h\|$, using the estimates for the differences in the data, and taking the supremum over all $0 < t < T$.
\hfill \qed
\end{proof}

\begin{remark}
The constant $C$ in the estimate \eqref{eq:approximation} depends on the kernel $k$, but is independent of $T$. The result can therefore be applied to time intervals of arbitrary size. 
Without substantially changing the argument, it is possible to obtain similar estimates also in other norms.
In many cases, $\widetilde y_h$ only serves as an intermediate result and the final approximation is given by
$y_h(t) = (P_h \widetilde y_h)(t)$,
where $P_h$ is some projection or interpolation operator; a particular case will be discussed in more detail below.
Estimates for the error $\|y - y_h\|$ can then be obtained by additionally taking into account the projection errors.
\end{remark}

\subsection{Piecewise polynomial approximations} \label{sec:uniform}

Many discretisation methods for integral or integro-differential equations, e.g., collocation or Galerkin methods \cite{Bru2004}, are based on piecewise polynomial approximations and fit into the abstract form mentioned above. 
As a particular example and for later reference, we consider such approximations in a bit more detail now.

Let $h>0$ be given, define $t^n = n h$, $n \ge 0$, and set $T=t^N=Nh$.
We set $I^n = [t^{n-1},t^n]$ for $1 \le n \le N$, and denote by $\Th=\{ I^n : 1 \le n \le N\}$ the resulting uniform mesh of the interval $[0,T]$. 
We write $\cP_p(a,b)$ for the space of polynomials of degree at most $p$ over the interval $(a,b)$, and $\cP_{q,q}((a,b) \times (c,d))= \cP_q(a,b) \otimes \cP_q(c,d)$ for the space of polynomials in two variables of degree at most $q$ in each variable.
We further define piecewise polynomial spaces
\begin{align}
    \cP_p(\Th) &= \{f \in L^1(0,T) : f|_{I^n} \in \cP_p(I^n)\}, \label{eq:polyspacedata}\\
    \cP_{q,q}(\Th \times \Th) &= \{k \in L^1((0,T) \times (0,T)) : k|_{I^m \times I^n}\in \cP_{q,q}(I^m \times I^n)\},
\end{align}
over the grid $\Th$ and the tensor-product grid $\Th \times \Th$.

For sufficiently regular functions $f$ and $k$ over the mesh $\Th$ and $\Th \times \Th$, piecewise polynomial approximations $k_h$, $f_h$ satisfying \eqref{eq:dataerror} can be found by appropriate interpolation and choosing the mesh size $h$ small enough. 
Without further structural assumptions on the data, it seems natural to use uniform grids $\Th$, which can be obtained, e.g., by  uniform refinement of some reference grid. 
In \Cref{fig:uniform}, we depict the resulting uniform partitions for approximation of the kernel function $k$.
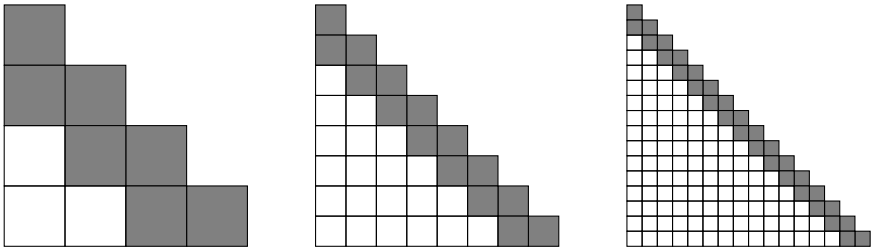
\begin{figure}[ht!]
\centering
\hspace*{2em}
\begin{tikzpicture}[scale=0.1]
\foreach \x in {0,...,3} {
\draw[fill=gray] (8*\x,24-8*\x) rectangle (8+8*\x,32-8*\x);
}
\foreach \x in {0,...,2} {
\draw[fill=gray] (8*\x,16-8*\x) rectangle (8+8*\x,24-8*\x);
}
\foreach \x in {0,...,1} {
\draw (8*\x,8-8*\x) rectangle (8+8*\x,16-8*\x);
}
\draw (0,0) rectangle (8,8);
\end{tikzpicture}
\hspace*{2em}
\begin{tikzpicture}[scale=0.1]
\foreach \x in {0,...,7} {
\draw[fill=gray] (4*\x,28-4*\x) rectangle (4+4*\x,32-4*\x);
}
\foreach \x in {0,...,6} {
\draw[fill=gray] (4*\x,24-4*\x) rectangle (4+4*\x,28-4*\x);
}
\foreach \x in {0,...,5} {
  \pgfmathsetmacro{\z}{5-\x}
  \foreach \y in {0,...,\z} {
     \draw (4*\x,4*\y) rectangle (4+4*\x,4+4*\y);
 }
}
\end{tikzpicture}
\hspace*{2em}
\begin{tikzpicture}[scale=0.1]
\foreach \x in {0,...,15} {
\draw[fill=gray] (2*\x,30-2*\x) rectangle (2+2*\x,32-2*\x);
}
\foreach \x in {0,...,14} {
\draw[fill=gray] (2*\x,28-2*\x) rectangle (2+2*\x,30-2*\x);
}
\foreach \x in {0,...,13} {
  \pgfmathsetmacro{\z}{13-\x}
  \foreach \y in {0,...,\z} {
     \draw (2*\x,2*\y) rectangle (2+2*\x,2+2*\y);
 }
}
\end{tikzpicture}
\caption{Uniformly refined grids $\Th \times \Th$ for approximation of $k$. Only the elements required for approximating $k(t,s)$ for $s \le t$ are depicted. The grid cells near the diagonal $t=s$, i.e, the \emph{nearfield}, play a special role and are thus coloured in gray.\label{fig:uniform}}
\end{figure}

\noindent
The evaluation of $\widetilde y_h$ defined by \eqref{eq:abstract} 
can be split into two contributions 
\begin{align} \label{eq:split}
\widetilde y_h(t) 
=\widetilde w_h(t) + \widetilde z_h(t)
\end{align}
with $w_h$ corresponding to the integrals over the \emph{farfield} cells and $z_h$ over the \emph{nearfield} cells, which are depicted in white and gray in \Cref{fig:uniform}, respectively. 
As discussed in the following subsection, the numerical treatment of these contributions differs slightly.

\subsection{Practical realisation}\label{sec:uniform2}

From equation \eqref{eq:abstract} and the choice of $f_h \in \cP_p(\Th)$ and $k_h \in \cP_{q,q}(\Th \times \Th)$, one can see that $\widetilde y_h$ is a piecewise polynomial of degree $\le p+q+1$ over the grid $\Th$. 
It is often convenient to replace $\tilde y_h$ by a piecewise polynomial $y_h \in \cP_p(\Th)$ with the same degree as the data.
For this purpose, we simply choose a set of distinct points $0 \le \gamma_j \le 1$, $0 \le j \le p$, and define $y_h \in \cP_p(\Th)$ by collocation
\begin{align} \label{eq:collocation}
y_h(t^n_j) = \widetilde y_h(t^n_j), \qquad 0 \le j \le p, 
\end{align}
on every time interval $I^n \in \Th$,
with collocation points $t^n_j=t^{n-1}+\gamma_j h$, $j=0,\ldots, p$. 
Now let $\psi_j^n$, $j=0,\ldots,p$, denote the Lagrangian basis of $\cP_p(I^n)$ with respect to the interpolation points $t^n_j$, i.e., 
\begin{align} \label{eq:psi_lagrangian}
    \psi_i^n(t_j^n) = \delta_{i,j}, 
    \qquad 0 \le i \le p, \quad 1 \le n \le N.
\end{align}
Then as a consequence of the uniformity of the mesh $\Th$, one may deduce that 
\begin{align} \label{eq:psi_invariant} \psi_i^n(t-t^n)=\psi_i^m(t-t^m), \qquad 0 \le i \le p, \quad 1 \le m,n \le N,
\end{align}
i.e., the basis $\{\psi_i^n\}_{0 \le i \le p}$ is \emph{invariant under translation}, which will become an important ingredient for our algorithm below.
The approximate data $f_h$ and the discrete solution $y_h$ can now be expanded as
\begin{align} \label{eq:yhfh}
y_h(t)=\sum_{j=0}^p y_j^n\psi_j^n(t),
\qquad
f_h(t)=\sum_{j=0}^p f_j^n\psi_j^n(t),
\qquad
\text{for } t\in I^n.
\end{align}
In a similar manner, the approximate kernel function $k_h$ can be expanded with respect to a set of bases $\{\varphi_i^n\}_{i=0,\ldots,q}$ for the spaces $\cP_q(I^n)$, which leads to 
\begin{align} \label{eq:kh}
k_h(s,t)=\sum_{i=0}^q\sum_{j=0}^q k_{i,j}^{m,n}\varphi_i^m(s)\varphi_j^n(t),
\qquad
\text{for } s\in I^m, \ t\in I^n.
\end{align}
We will again assume translation invariance of this second basis, i.e., 
\begin{align} \label{eq:phi_invariant}
\varphi_i^n(t-t^n)=\varphi_i^m(t-t^m),
\qquad 0 \le i \le q, \quad 1 \le m,n \le N.
\end{align}
Note that we allow for different polynomial degrees $q \ne p$ in the approximations $y_h, f_h$, and $k_h$, and hence two different sets of basis functions are required.
Evaluating \eqref{eq:abstract} at time $t=t_j^m$ and utilizing \eqref{eq:collocation}, we obtain 
\begin{align}\label{eq:y_j^m_dense}
y_h(t_j^m)
=
\sum_{n=1}^{m-2}\int_{I^n}k_h(t_j^m,s)f_h(s)\d s + \int_{t_{j}^{m-2}}^{t_j^m}k_h(t_j^m,s)f_h(s)\d s,
\end{align}
where we used the splitting of the integration domain $(0,t_j^m)$ into subintervals of the mesh $\Th$ and an additional separation of farfield and nearfield contributions; see \Cref{fig:uniform}. 
By inserting the basis representations \eqref{eq:yhfh} and \eqref{eq:kh} for $y_h$, $f_h$, and $k_h$, the integrals in the farfield contribution can be expressed as 
\begin{align}
\int_{I^n}k_h(t_j^m,s)f_h(s)\d s
=
\sum_{i=0}^q\varphi_i^m(t_j^m)\sum_{k=0}^q k_{i,k}^{m,n}\sum_{r=0}^p \int_{I^n}\varphi_k^n(s)\psi_r^n(s)\d s\, f_r^n.
\end{align}
For convenience of presentation, let us introduce the short-hand notations 
\begin{align}\label{eq:PijandQij}
P_{i,j}=\varphi_i^m(t_j^m),
\qquad
Q_{k,r}=\int_{I^n}\varphi_k^n(s)\psi_r^n(s)\d s,
\end{align}
and note that the corresponding matrices $P$ and $Q$ are independent of the time steps $m$, $n$, due to the translation invariance conditions \eqref{eq:psi_invariant} and \eqref{eq:phi_invariant}.
The result vector $y^m$ containing entries $y^m_j=y_h(t_j^m)$ from \eqref{eq:y_j^m_dense} can then be expressed as  
\begin{align}
y^m = w^m + z^m
\end{align}
with farfield contribution $w^m$ given by
\begin{align}\label{eq:PandQ}
    w^m = P u^m, \qquad u^m = \sum_{n=1}^{m-2} k^{m,n} g^n, \qquad g^n = Q f^n,
\end{align}
where $k^{m,n}$ is the matrix containing the entries $k^{m,n}_{i,j}$. 
Further introducing the symbols $K^{m,n} = P k^{m,n} Q$, this may be stated equivalently as $w^m = \sum_{n=0}^{m-2} K^{m,n} f^n$. 
In a similar manner, we may represent the nearfield contribution $z^m$ by 
\begin{align}
    z^m = K^{m,m-1} f^{m-1} + K^{m,m} f^m,
\end{align}
with appropriate nearfield matrices $K^{m,m-1}$, $K^{m,m} \in \RR^{(p+1) \times (p+1)}$.

We denote by $\tty$, $\ttf \in \RR^{N (p+1)}$ the global vectors that are obtained by stacking the element contributions $y^m$, $f^n$ together. 
The computation of $\tty$ can then be written as matrix-vector product $\tty = \ttK  \ttf$ with block-matrix $\ttK \in \RR^{N (p+1) \times N(p+1)}$ consisting of blocks $K^{m,n}$ as defined above. 
A possible block-based implementation of this matrix-vector product can be realized as follows.

\begin{algorithm}
\caption{\label{alg:denseconvolution}Evaluation of Volterra integral operators for uniform meshes.}
\begin{algorithmic}
\For{$m=1,\ldots,N$}
\State $u=0$
\For{$n=1,\ldots,m-2$}
\State $u=u+k^{m,n}g^n$
\EndFor
\State $g^m=Q f^m$
\State $w^m=Pu$
\State $z^m=K^{m,m-1}f^{m-1}+K^{m,m}f^m$
\State $y^m=w^m+z^m$
\EndFor
\end{algorithmic}
\end{algorithm}
\begin{remark}
At first sight, this algorithm may seem more complicated than actually required. In fact, after generating the matrix blocks $K^{m,n} = P k^{m,n} Q$, the $m$th component of the result vector could also be computed as $y^m=\sum_{n=1}^m K^{m,n} f^n$. 
The above version, however, is closer to the algorithm developed in the next section. 
Moreover, it is \emph{evolutionary}, i.e., the entries of the vector $\tty$ are computed one after another, and \emph{oblivious} in the sense that only the blocks $f^{m-1}$ and $f^m$ are needed for the computation of $y^m$. 
Note, however, that all auxiliary values $g^n$, $n=1,\ldots,m-2$ are required to compute the block $y^m$ and therefore have to be kept in memory.
This will be substantially improved in \Cref{sec:adaptive}.
\end{remark}

\subsection{Computational complexity}
As indicated above, the computation of $y_h$ according to \eqref{eq:y_j^m_dense} can be phrased in algebraic form as a matrix-vector product
\begin{align} \label{eq:algebraic2}
\tty = \ttK \ttf, 
\end{align}
with $\tty$ and $\ttf$ denoting the coefficient vectors for $y_h$ and $f_h$, and a lower block triangular matrix $\ttK \in \RR^{N (p+1) \times N (p+1)}$. Note that the pattern of the matrix $\ttK$ is structurally the same as that of the tensor-product grid underlying the approximation of the kernel function $k$; see \Cref{fig:uniform}, with each cell corresponding to a block  
of size $(p+1) \times (p+1)$.
Thus, the the computation of $\tty=\ttK\ttf$ will in general require $O(p^2 N^2)$ operations and $O(p^2 N^2)$ memory to store the matrix $\ttK$. In addition, we require $O(p N)$ active memory to store two values of $f^n$ and the history of $g$.

\section{A fast and oblivious algorithm}\label{sec:fo}
The aim of this section is to introduce a novel algorithm which allows for a simultaneous compression of the matrix $\ttK$ used for the evaluation of \eqref{eq:algebraic2} and the history of the data stored in the vectors $g^n$, $n \ge 1$.
The underlying approximation is that of $\cH^2$-matrix compression techniques \cite{Bor2010,Hac2015}. 
We first collect some results about these hierarchical approximations and then state and analyze our algorithm.

\subsection{Multilevel partitioning}
For ease of presentation we will assume that the number of time steps is given as $N=2^L$ with $L\in\NN$ and define $h=T/N$.
Now let $I^{(n;1)}=I^n$ and introduce a hierarchy of nested partitions into subintervals
\begin{align*}
    I^{(n;\ell)}&=I^{(2n-1;\ell-1)}\cup I^{(2n;\ell-1)}
    =\Big[t^{2^{\ell-1}(n-1)},t^{2^{\ell-1}n}\Big],\qquad \ell > 1,
\end{align*}
of length $2^{\ell-1} h$ by recursive coarsening of the intervals; see \Cref{fig:hierarchy} for a sketch. 
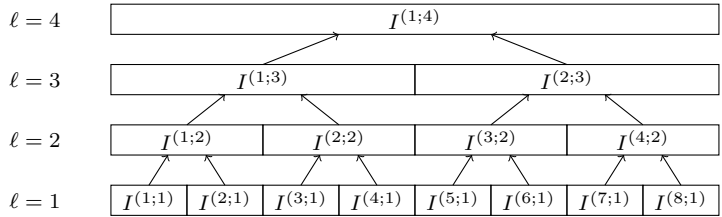
\begin{figure}[ht!]
\centering
\hspace*{2em}
\begin{tikzpicture}[xscale=1,yscale=0.4]

\foreach \x in {1,...,4} {
\draw (-1,2*\x-1.5) node {$\ell = \x$};
}

\foreach \x in {0,...,7} {
\draw (\x,0) rectangle (\x+1,1);
\pgfmathsetmacro\result{\x+1}
\draw (\x+0.5,0.5) node {$I^{(\pgfmathprintnumber{\result};1)}$};
}

\foreach \x in {0,...,3} {
\draw (2*\x,2) rectangle (2*\x+2,3);
\pgfmathsetmacro\result{\x+1}
\draw (2*\x+1,2.5) node {$I^{(\pgfmathprintnumber{\result};2)}$};
\draw[->] (2*\x+0.5,1) -- (2*\x+0.75,2);
\draw[->] (2*\x+1.5,1) -- (2*\x+1.25,2);
}

\foreach \x in {0,...,1} {
\draw (4*\x,4) rectangle (4*\x+4,5);
\pgfmathsetmacro\result{\x+1}
\draw (4*\x+2,4.5) node {$I^{(\pgfmathprintnumber{\result};3)}$};
\draw[->] (4*\x+1,3) -- (4*\x+1.5,4);
\draw[->] (4*\x+3,3) -- (4*\x+2.5,4);
}

\foreach \x in {0,...,0} {
\draw (8*\x,6) rectangle (8*\x+8,7);
\pgfmathsetmacro\result{\x+1}
\draw (8*\x+4,6.5) node {$I^{(\pgfmathprintnumber{\result};4)}$};
\draw[->] (8*\x+2,5) -- (8*\x+3,6);
\draw[->] (8*\x+6,5) -- (8*\x+5,6);
}


%
%
\end{tikzpicture}
\hspace*{2em}
\caption{Mesh hierarchy obtained by recursive coarsening of intervals $I^n=I^{(n;1)}$ with maximal corsening level $L=3$ and $N=2^L=8$ fine grid cells.
\label{fig:hierarchy}}
\end{figure}

\noindent
From the hierarchic construction, one can immediately see that 
\begin{align} \label{eq:Cnl}
I^n \subset I^{(C(n;\ell);\ell)} \qquad \text{for} \quad C(n;\ell):=\lceil n/2^{\ell-1}\rceil,
\end{align}
where $\lceil r\rceil$ denotes the smallest integer larger or equal to $r$ as usual.  

In a similar spirit to \cite{Bor2010,FD2009,GR1987,Hac2015}, we next introduce a multilevel partitioning of the support of the kernel $k$ leading to adaptive hierarchical meshes
\begin{align} \label{eq:ATh}
\begin{aligned}
\ATh=\{I^{(m;\ell)} \times &{}I^{(n;\ell)} \colon
\ell=1~\text{with}~n\in\{m-1,m\}~\text{or}\\
&{}I^{(m;\ell)}\cap I^{(n;\ell)}=\emptyset~\text{with}~I^{(\lceil m/2\rceil;\ell+1)}\cap I^{(\lceil n/2\rceil;\ell+1)}\neq\emptyset\}.
\end{aligned}
\end{align}
Note that every element of $\ATh$ is square and thus corresponds to one element of a, possibly coarser, uniform mesh $\Ths \times \Ths$.
Moreover, any element in $\ATh$ is the union of elements of the underlying uniform mesh $\Th \times \Th$ and can be constructed by recursive  agglomeration or coarsening.
\begin{figure}[ht!]
\centering
\hspace*{2em}
\begin{tikzpicture}[scale=0.1]

\foreach \x in {0,...,3} {
\draw[fill=gray] (8*\x,24-8*\x) rectangle (8+8*\x,32-8*\x);
}
\foreach \x in {0,...,2} {
\draw[fill=gray] (8*\x,16-8*\x) rectangle (8+8*\x,24-8*\x);
}

\foreach \x in {0,...,1} {
\draw (8*\x,8-8*\x) rectangle (8+8*\x,16-8*\x);
}
\draw (0,0) rectangle (8,8);

\end{tikzpicture}
\hspace*{2em}
\begin{tikzpicture}[scale=0.1]

\foreach \x in {0,...,7} {
\draw[fill=gray] (4*\x,28-4*\x) rectangle (4+4*\x,32-4*\x);
}
\foreach \x in {0,...,6} {
\draw[fill=gray] (4*\x,24-4*\x) rectangle (4+4*\x,28-4*\x);
}

\foreach \x in {0,...,5} {
\draw (4*\x,20-4*\x) rectangle (4+4*\x,24-4*\x);
}
\foreach \x in {0,...,2} {
\draw (8*\x,16-8*\x) rectangle (4+8*\x,20-8*\x);
}

\foreach \x in {0,...,1} {
\draw (8*\x,8-8*\x) rectangle (8+8*\x,16-8*\x);
}
\draw (0,0) rectangle (8,8);

\end{tikzpicture}
\hspace*{2em}
\begin{tikzpicture}[scale=0.1]

\foreach \x in {0,...,15} {
\draw[fill=gray] (2*\x,30-2*\x) rectangle (2+2*\x,32-2*\x);
}
\foreach \x in {0,...,14} {
\draw[fill=gray] (2*\x,28-2*\x) rectangle (2+2*\x,30-2*\x);
}

\foreach \x in {0,...,13} {
\draw (2*\x,26-2*\x) rectangle (2+2*\x,28-2*\x);
}
\foreach \x in {0,...,6} {
\draw (4*\x,24-4*\x) rectangle (2+4*\x,26-4*\x);
}

\foreach \x in {0,...,5} {
\draw (4*\x,20-4*\x) rectangle (4+4*\x,24-4*\x);
}
\foreach \x in {0,...,2} {
\draw (8*\x,16-8*\x) rectangle (4+8*\x,20-8*\x);
}

\foreach \x in {0,...,1} {
\draw (8*\x,8-8*\x) rectangle (8+8*\x,16-8*\x);
}
\draw (0,0) rectangle (8,8);

\end{tikzpicture}
\caption{Adaptive hierarchical meshes $\ATh$ obtained by recursive coarsening of far fields cells in the corresponding uniformly refined meshes $\Th \times \Th$ in \Cref{fig:uniform}. 
\label{fig:adaptive}}
\end{figure}
As illustrated in \Cref{fig:adaptive}, the hierarchic adaptive mesh $\ATh$ can again be split into nearfield elements adjacent to the diagonal and the remaining far field elements.
Let us remark that the resulting partitioning and its splitting coincide with most of the classical partitioning strategies developed in the context of panel clustering and $\cH$- and $\cH^2$-matrices, see \cite{Hac2015} and the references therein.

\subsection{Adaptive data sparse approximation} \label{sec:adaptive}

Let $\cP_{q,q}(\ATh)$ be the space of piecewise polynomials of degree $\le q$ in each variable over the mesh $\ATh$.
Since the adaptive hierarchical mesh is obtained by coarsening of the underlying uniform grid $\Th \times \Th$, we certainly have
\begin{align*}
    \cP_{q,q}(\ATh) \subset \cP_{q,q}(\Th \times \Th).
\end{align*}
Instead of a uniform approximation as used in \Cref{sec:uniform}, we now consider adaptive approximations $k_h \in \cP_{q,q}(\ATh)$ for the evaluation of \eqref{eq:abstract} or \eqref{eq:y_j^m_dense}. %
\begin{remark}
Let us assume for the moment that the kernel $k$ in \eqref{eq:integral} is  \emph{asymptotically smooth}, i.e., there exist constants  $c_1,c_2>0$, $r\in\RR$ such that
\begin{align} \label{eq:smooth}
\big|\partial_t^{\alpha}\partial_s^{\beta}k(t,s)\big|
\leq
c_1\frac{(\alpha+\beta)!}
{c_2^{{\alpha+\beta}}}
(t-s)^{r-\alpha-\beta}
\end{align}
for all $\alpha$, $\beta \ge 0$ and all $t \ne s$.
As shown in \cite{Bor2010,Hac2015}, adaptive approximations $k_h \in \cP_{q,q}(\ATh)$ can be constructed for asymptotically smooth kernels, which converge exponentially in $q$ in the farfield. 
As a consequence, the same level of accuracy required in \Cref{lem:abstract} can be achieved by adaptive approximations with much less degrees of freedom than by uniform approximations.
\end{remark}

\begin{remark}
It is not difficult to see that 
$\operatorname{dim}(\cP_{q,q}(\Th \times \Th)) = \cO(N^2q^2)$ 
while $\operatorname{dim}(\cP_{q,q}(\ATh)) = \cO(Nq^2)$.  
The adaptive hierarchical approximation thus is \emph{data-sparse} and leads to substantial savings in the memory required for storing the kernel approximation or its matrix respresentation \eqref{eq:algebraic2}; compare to \Cref{lem:H2memory} at the end of this section.
In addition, an appropriate reorganisation of the operations required for the matrix-vector product \eqref{eq:algebraic2} leads to a substantial reduction of computational complexity. Moreover, the fast evaluation also induces an automatic compression of the history of the data. 
\end{remark}

\subsection{Multilevel hierarchical basis}\label{sec:multilevel}

In order to obtain algorithms for the matrix-vector-multiplication \eqref{eq:algebraic} of quasi-optimal complexity, we require the following second fundamental ingredient.
Based on the multilevel hierarchy $I^{(n;\ell)}$ of time intervals and the translation invariance of the basis functions $\varphi_i^n =: \varphi_i^{(n;1)}$, we define a multilevel basis
\begin{align}\label{eq:multilevelbasis}
\varphi^{(n;\ell)}_i(t)
=
\begin{cases}
\sum_{j=0}^q A_{i,j}^{(1)}\varphi^{(2n-1;\ell-1)}_j(t),& t\in I^{(2n-1;\ell-1)},\\
\sum_{j=0}^q A_{i,j}^{(2)}\varphi^{(2n;\ell-1)}_j(t),& t\in I^{(2n;\ell-1)},
\end{cases}
\end{align}
for the spaces $\cP_q(I^{(n;\ell)})$, $\ell > 1$ appearing in the far field blocks of the approximation $k_h \in \cP_{q,q}(\ATh)$.
Let us note that by translation invariance, the coefficients $A_{i,j}^{(1)}$ and $A_{i,j}^{(2)}$ are independent of $n$ and $\ell$.

For each of the elements of the adaptive partition $\ATh$, we expand the kernel function as
\begin{align} \label{eq:kernelexpansion}
k_h(s,t)=\sum_{i=0}^q\sum_{j=0}^q k_{i,j}^{(m,n;\ell)}\varphi _i^{(m;\ell)}(s)\varphi _j^{(n;\ell)}(t),\qquad (s,t)\in I^{(m;\ell)}\times I^{(n;\ell)}.
\end{align}
For the computation of the farfield contributions in \eqref{eq:y_j^m_dense}, we can further split the integration domain into
\begin{align}\label{eq:farfieldsplitting}
[0,t^{m-2}]
=
\bigcup_{\ell=1}^{L(m)}\bigcup_{n=1}^{B(m;\ell)}I^{(P(m,n;\ell);\ell)}
\end{align}
with 
$L(m)=\lceil\log_2(m)\rceil-1$,
$B(m;\ell)=\bin(m)_\ell+1$,
and $P(m,n;\ell)=C(m;\ell)-n-1$.
Here $\bin(m)_\ell$ denotes the $\ell$th digit from behind of the binary representation of $m$ obtained by {\sc Matlab}'s \texttt{dec2bin} function. 
This partitioning of the integration domain exactly corresponds to the cells of a corresponding row in the adaptive mesh $\ATh$; see \Cref{fig:LBPillustration} for an illustration.
More precisely, $L(m)\in\NN$ describes the number of different coarsening levels involved in the $m$th row, $B(m;\ell)\in\{1,2\}$ corresponds to the number of intervals on each level, and $P(m,n;\ell)$ yields the indices of these intervals on level $\ell$.
\begin{figure}[ht!]
\centering
\begin{tikzpicture}[scale=0.1]

\foreach \x in {0,...,15} {
\draw[fill=lightgray,draw=darkgray] (2*\x,30-2*\x) rectangle (2+2*\x,32-2*\x);
}
\foreach \x in {0,...,14} {
\draw[fill=lightgray,draw=darkgray] (2*\x,28-2*\x) rectangle (2+2*\x,30-2*\x);
}

\foreach \x in {0,...,13} {
\draw[draw=darkgray] (2*\x,26-2*\x) rectangle (2+2*\x,28-2*\x);
}
\foreach \x in {0,...,6} {
\draw[draw=darkgray] (4*\x,24-4*\x) rectangle (2+4*\x,26-4*\x);
}

\foreach \x in {0,...,5} {
\draw[draw=darkgray] (4*\x,20-4*\x) rectangle (4+4*\x,24-4*\x);
}
\foreach \x in {0,...,2} {
\draw[draw=darkgray] (8*\x,16-8*\x) rectangle (4+8*\x,20-8*\x);
}

\foreach \x in {0,...,1} {
\draw[draw=darkgray] (8*\x,8-8*\x) rectangle (8+8*\x,16-8*\x);
}
\draw[draw=darkgray] (0,0) rectangle (8,8);

\draw (-7,5) node {\small $m=14$};

\draw[very thick] (0,5) -- (24,5);
\draw[very thick] (0,4.5) -- (0,5.5);
\draw[very thick] (8,4.5) -- (8,5.5);
\draw[very thick] (16,4.5) -- (16,5.5);
\draw[very thick] (20,4.5) -- (20,5.5);
\draw[very thick] (22,4.5) -- (22,5.5);
\draw[very thick] (24,4.5) -- (24,5.5);

\node (I31) at (48,32){\small $I^{(1;3)}$} ;
\node (I32) at (48,28){\small $I^{(2;3)}$} ;
\node (I25) at (48,24){\small $I^{(5;2)}$} ;
\node (I111) at (48,20){\small $I^{(11;1)}$} ;
\node (I112) at (48,16){\small $I^{(12;1)}$} ;

\draw[decorate,decoration={brace,amplitude=4}] (52,34) to (52,26);
\draw[thick] (4,5) .. controls +(up:12) and +(left:16) .. (I31);
\draw[thick] (12,5) .. controls +(up:10) and +(left:14) .. (I32);
\draw[decorate,decoration={brace,amplitude=4}] (52,26) to (52,22);
\draw[thick] (18,5) .. controls +(up:10) and +(left:12) .. (I25);
\draw[decorate,decoration={brace,amplitude=4}] (52,22) to (52,14);
\draw[thick] (21,5) .. controls +(up:10) and +(left:10) .. (I111);
\draw[thick] (23,5) .. controls +(up:9) and +(left:10) .. (I112);

\node (Bm3) at (68,30) {\small $\Rightarrow B(m,3)=2$};
\node (Bm2) at (68,24) {\small $\Rightarrow B(m,2)=1$};
\node (Bm1) at (68,18) {\small $\Rightarrow B(m,1)=2$};

\draw[decorate,decoration={brace,amplitude=4}] (80,32) to (80,16);
\node (L3) at (94,24) {\small $\Rightarrow L(m)=3$};

\draw (78,5) node {\small $[0,t^{m-2}]=\big(I^{(1;3)}\cup I^{(2;3)}\big)\cup I^{(5;2)}\cup \big(I^{(11;1)}\cup I^{(12;1)}\big)$};
\end{tikzpicture}
\caption{Illustration of the meaning of the quantities $L(m)$, $B(m,\ell)$, and $P(m,n;\ell)$ arising in \eqref{eq:farfieldsplitting} for $m=14$ time steps.}
\label{fig:LBPillustration}
\end{figure}
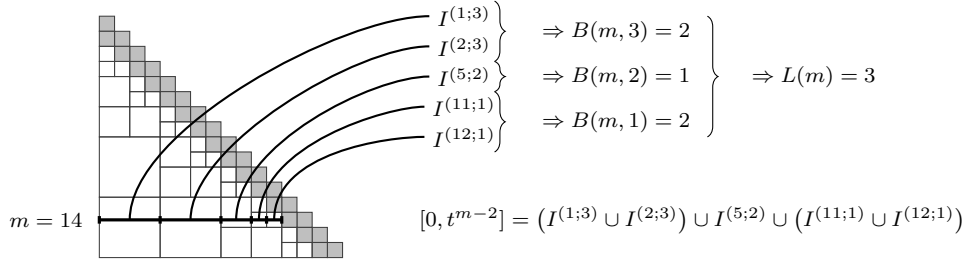
By inserting the splitting of the integration domain in \eqref{eq:farfieldsplitting}  into equation \eqref{eq:y_j^m_dense}, we obtain
\begin{align*}
y_h(t_j^m)
=
\sum_{\ell=1}^{L(m)}\sum_{n=1}^{B(m;\ell)}
\int_{I^{(P(m,n;\ell);\ell)}}k_h(t_j^m,s)f_h(s)\d s + \int_{t^{m-2}}^{t_j^m}k_h(t_j^m,s)f_h(s)\d s.
\end{align*}
We can now further insert the expansions \eqref{eq:kernelexpansion} for the kernel $k_h$ into the farfield integrals over $I^{(P(m,n;\ell);\ell)}$ to see that 
\begin{align*}
&\int_{I^{(P(m,n;\ell);\ell)}}k_h(t_j^m,s)f_h(s)\d s\\
&\hspace{2cm}=
\sum_{i=0}^q\varphi_i^{(C(n;\ell);\ell)}(t_j^m)\sum_{k=0}^q k_{i,k}^{(C(n;\ell),P(m,n;\ell);\ell)}g_k^{(P(m,n;\ell);\ell)},
\end{align*}
with auxiliary values
\begin{align*}
g_k^{(P(m,n;\ell);\ell)}
=
\int_{I^{(P(m,n;\ell);\ell)}}\varphi_k^{(P(m,n;\ell);\ell)}(s)f_h(s)\d s.
\end{align*}
By the recursive definition of  $\varphi^{(n;\ell)}$, the latter expression can be rewritten as
\begin{align*}
g_k^{(i;\ell)}
=
\int_{I^{(i;\ell)}}\varphi_k^{(i;\ell)}(s)f_h(s)\d s
=
\sum_{j=0}^q
\big(
A_{i,j}^{(1)}g_k^{(2i-1;\ell-1)}+A_{i,j}^{(2)}g_k^{(2i;\ell-1)}
\big)
\end{align*}
for $\ell > 1$ complemented with $g_k^{(i;1)} = g_k^i = \sum_{r=0}^p Q_{k,r} f^i_r$ as defined on the uniform grid in \Cref{sec:uniform}.
Evaluation of the recursion \eqref{eq:multilevelbasis} at time $t_j^m$ further yields
\begin{align*}
\varphi_i^{(C(n;\ell);\ell)}(t_j^m)=\sum_{k=0}^q A_{i,k}^{(B(m,\ell-1))}\varphi_k^{(C(n,\ell-1);\ell-1)}(t_j^m),
\end{align*}
such that we may define intermediate values
\begin{align*}
u_j^{(C(n;\ell);\ell)}
&=
\sum_{i=0}^q A_{i,j}^{(B(m;\ell))}u_i^{(C(m,\ell+1);\ell+1)}
+
\sum_{n=1}^{B(m;\ell)}\sum_{k=0}^q k_{i,k}^{(C(n;\ell),P(m,n;\ell);\ell)}g_k^{(P(m,n;\ell);\ell)}.
\end{align*}
The result of the integral \eqref{eq:y_j^m_dense} then is finally obtained by 
\begin{align*}
y_j^m=y_h(t_j^m) = \sum_{k=0}^q P_{j,k}u_k^{(m;1)}+z^m
\end{align*}
with nearfield contributions $z^m$ and projections $P_{j,k}$ as given in \eqref{eq:PijandQij} and \eqref{eq:PandQ}. 
The above derivations can be summarized as follows.

\begin{algorithm}[ht!]
\caption{\label{alg:fastconvolution}A fast and oblivious evolutionary algorithm.}
\begin{algorithmic}[1]
\For{$m=1,\ldots,N$}
\State $L_{\text{coarse}}=1+\lfloor\log_2(\texttt{bitxor}(m,m-1))\rfloor$
\For{$\ell=L_{\text{coarse}},\ldots,1$}
\If{$B(m;\ell)\neq B(m-1;\ell)$}
\State$\ttg^{(2;\ell)}=\ttg^{(1;\ell)}$
\If{$\ell>1$}
\State$\ttg^{(1;\ell)}=A^{(1)}\ttg^{(1;\ell-1)}+A^{(2)}\ttg^{(2;\ell-1)}$
\Else
\State$\ttg^{(1;\ell)}=Q\ttf^{(2)}$
\EndIf
\State Set $(\ttK_n)_{i,j}=k_{i,j}^{(C(n;\ell),P(m,n;\ell);\ell)}$ for $n\in\{1,B(m;\ell)\}$
\State$\ttu^\ell=\ttK_1\ttg^{(1;\ell)}$
\If{$B(m;\ell)=2$}
\State$\ttu^\ell=\ttu^\ell+\ttK_2\ttg^{(2;\ell)}$
\EndIf
\State$\ttu^\ell = \ttu^\ell + \big(A^{(B(m;\ell))}\big)^\top\ttu^{(\ell+1)}$
\EndIf
\EndFor
\State $\ttf^{(2)}=\ttf^{(1)}$
\State $\ttf^{(1)}_j=f(t_j^m)$, $j=0,\ldots,p$
\State $z^m=K^{m,m-1}\ttf^{(2)}+K^{m,m}\ttf^{(1)}$
\State $y^m=P\ttu^{(1)}+z^m$
\EndFor
\end{algorithmic}
\end{algorithm}

\noindent
\begin{remark}
The {\sc Matlab} function $\texttt{bitxor}(a,b)$ returns the integer generated by a bit-wise xor comparison of the binary representation of $a$ and $b$ in $\cO(1)$ complexity.
This allows to determine $L_{\text{coarse}}=\argmax_{k}\{B(m;k)\neq B(m-1;k)\}$ in $\cO(1)$ complexity in each step and is required for achieving a theoretical runtime of $\cO(N)$.
In actual implementations one may just set $L_{\text{coarse}}=L(m)$, without any notable difference in computation times.
Further note that only one value $u^{(n;\ell)}$ and two values of $g^{(n;\ell)}$ are required for each level $\ell$.
Moreover, at most two values of $f^n$ are required at any timestep. The required buffers are denoted by $\ttu^{(\ell)}$, $\ttf^{(i)}$, and $\ttg^{(i;\ell)}$, $i=1,2$, $\ell=1,2,3,\ldots$. 
The complexity of the overall algorithm is analyzed in detail in the next section.
\end{remark}

\begin{remark}
Readers familiar with $\cH^2$-matrices or the fast multipole method may notice that our algorithm is similar to the corresponding matrix-vector multiplications but with rearranged computations. In each time step, the algorithm checks for changes in the matrix partitioning structure compared to the previous time step. Then, starting from the coarsest level, an upward pass of one level is executed for all coarsened intervals of the farfield by applying the \emph{transfer} or \emph{multipole-to-multipole matrices}. Entities from the coarsened intervals are overwritten. Thereafter the farfield interactions are computed by applying the \emph{kernel} or \emph{multipole-to-local matrices} corresponding to the changed intervals and directly including the contributions of the next coarser level with a downward pass by appling \emph{transfer} or \emph{local-to-local matrices}. Finally, the nearfield contributions are applied.
\end{remark}

\subsection{Complexity estimates} 

In the following, we consider \Cref{alg:fastconvolution} for the evaluation of \eqref{eq:y_j^m_dense} with approximate kernel $k_h \in \cP_{q,q}(\ATh)$ and data $f_h \in \cP_p(\Th)$, 
and with $N=2^L$ denoting the number of time intervals in $\Th$. 
The assertions of the following two lemmas are well-known, see e.g.~\cite{Bor2010}, but their reasoning is simple and illustrative such that we repeat it for the convenience of the reader. %

\begin{lemma}\label{lem:evalcomplexity}
\Cref{alg:fastconvolution} can be executed in $\cO\big(N(p^2+q^2)\big)$ operations.
\end{lemma}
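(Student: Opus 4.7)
The plan is to bound the cost of \Cref{alg:fastconvolution} by separately analysing the nearfield contribution, the farfield work that accumulates across levels, and the final projection. The essential ingredient is a careful accounting of how often each level $\ell$ of the hierarchy is actually updated in the inner loop, controlled by the condition $B(m;\ell)\neq B(m-1;\ell)$.

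First, I would observe that at each timestep $m$ the nearfield update (the assignment of $z^m$ and the update $y^m = P\ttu^{(1)}+z^m$) costs $\cO(p^2)$ operations for the two nearfield matrix-vector products with blocks in $\RR^{(p+1)\times(p+1)}$, plus $\cO(pq)$ for the transfer matrix $P \in \RR^{(p+1)\times(q+1)}$; summed over $m=1,\ldots,N$ this yields $\cO(N(p^2+pq))\le \cO(N(p^2+q^2))$ work. Similarly, whenever level $\ell=1$ is touched, the finest-level update $\ttg^{(1;1)}=Q\ttf^{(2)}$ costs $\cO(pq)$, and this happens at most $N$ times, again within the claimed budget.

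Next, I would turn to the farfield loop over coarsening levels. By inspection of the body of the \textbf{if}-branch, a single execution of the update at level $\ell$ consists of: a constant number of matrix-vector products with $(q+1)\times(q+1)$ matrices (the transfer operators $A^{(1)}$, $A^{(2)}$ and the kernel blocks $\ttK_1,\ttK_2$), plus vector copies and additions. Each of these costs $\cO(q^2)$. The crux of the argument is then the observation that the condition $B(m;\ell)\neq B(m-1;\ell)$ is equivalent to a flip of the $\ell$th bit in the binary representation of $m$, which happens at most $\lceil N/2^{\ell-1}\rceil$ times as $m$ ranges over $1,\ldots,N$. Equivalently, this is exactly captured by the cut-off $L_{\text{coarse}}=1+\lfloor\log_2(\mathtt{bitxor}(m,m-1))\rfloor$ in the algorithm, which restricts the inner loop to those levels that actually change.

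Summing the per-level work over all timesteps then gives the geometric bound
\begin{equation*}
\sum_{\ell=1}^{L}\Bigl\lceil\frac{N}{2^{\ell-1}}\Bigr\rceil \cdot \cO(q^2)
\;\le\; \cO(q^2)\sum_{\ell=0}^{\infty}\frac{N}{2^{\ell}}
\;=\; \cO(Nq^2),
\end{equation*}
so the total farfield cost is $\cO(Nq^2)$. Combined with the nearfield and projection contributions, the overall complexity is $\cO(N(p^2+q^2))$, as claimed. The main obstacle is the bit-flip counting argument used to show that level $\ell$ is active only $\cO(N/2^{\ell-1})$ times; once this is established, the remaining work is routine bookkeeping of dense $\cO(p^2)$, $\cO(q^2)$, and $\cO(pq)$ matrix-vector products.
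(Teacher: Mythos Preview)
Your proof is correct and follows essentially the same approach as the paper: both decompose the work into nearfield ($\cO(Np^2)$), projection/embedding via $P,Q$ ($\cO(Npq)$), and per-level farfield updates counted as $\cO(N/2^{\ell-1})$ executions of $\cO(q^2)$ work, then sum the resulting geometric series. The only presentational difference is that the paper frames the count as ``the algorithm rearranges the operations of a standard $\cH^2$-matrix-vector product'' and then tallies operations per level, whereas you obtain the same per-level count directly from the bit-flip characterisation of the condition $B(m;\ell)\neq B(m-1;\ell)$.
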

\begin{proof}
The algorithm rearranges the operations of a standard $\cH^2$-matrix-vector multiplication without adding any significant operations. We therefore simply estimate the complexity of the corresponding $\cH^2$-matrix-vector multiplication.
Let us first remark that the computation of $z^m$ in line 21 requires $\cO(p^2)$ operations in each time step. Second, on a given level $\ell$, we have to perform $\cO(2^\ell)$ applications of $A^{(1)}$ and $A^{(2)}$ in total for obtaining the $g^{(n;\ell)}$ from the ones on level $\ell-1$, see line 7. Similarly, $\cO(2^\ell)$ applications of $A^{(B(m;\ell))}$ in line 16 are in total required on level $\ell$ for the computation of the $u^{(n;\ell)}$ and $\cO(2^\ell)$ multiplications by $k^{(k,n;\ell)}$ need to be performed in lines 12 and 14. Finally, $\cO(N)$ values of $g^n=Qf^n$ and $P\ttu^{(1)}$ need to be computed in line 9 and line 22. Summing up yields
\begin{align*}
    \cO(Np^2)+3\cO(q^2)\sum_{\ell=1}^L\cO(2^{L-\ell}) + 2\cO(Npq)
    =
    \cO(Np^2)+\cO(2^L q^2) + \cO(Npq),
\end{align*}
and since $N=2^L$ Young's inequality yields the assertion. 
\hfill \qed
\end{proof}

\begin{lemma}\label{lem:H2memory}
The $\cH^2$-matrix representation $\ttK$ of the adaptive hierarchic approximation $k_h \in \cP_{q,q}(\ATh)$ can be stored in $\cO\big(N(p^2+q^2)\big)$ memory. 
If the kernel is of convolution type \eqref{eq:convolution_kernel}, then the memory cost reduces to $\cO\big(p^2+\log_2(N)q^2)\big)$. 
\end{lemma}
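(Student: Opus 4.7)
The plan is to count separately the memory required for each ingredient of the $\cH^2$-representation used in Algorithm~\ref{alg:fastconvolution}: (i) the nearfield blocks $K^{m,m-1}$ and $K^{m,m}$; (ii) the coupling matrices $k_{i,j}^{(m,n;\ell)}$ attached to the farfield cells of $\ATh$; (iii) the transfer matrices $A^{(1)}, A^{(2)}$ used in the recursion \eqref{eq:multilevelbasis}; and (iv) the basis matrices $P$ and $Q$ from \eqref{eq:PijandQij}. The translation invariance assumptions \eqref{eq:psi_invariant} and \eqref{eq:phi_invariant} will be crucial for bounding (iii) and (iv) by $\cO(q^2)$ and $\cO(pq)$, respectively, independently of $N$.

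For (i), the definition \eqref{eq:ATh} shows that each row $m$ contains at most two nearfield blocks; with $N$ rows and blocks of size $(p+1)\times(p+1)$, this gives $\cO(Np^2)$ memory. For (ii), I would count the farfield cells level by level: on level $\ell$ there are $N/2^{\ell-1}$ intervals $I^{(n;\ell)}$, and by the admissibility condition in \eqref{eq:ATh} each one is paired with at most $\cO(1)$ admissible intervals whose parents were non-admissible. Hence the number of farfield blocks on level $\ell$ is $\cO(N/2^{\ell-1})$, each carrying a $(q+1)\times(q+1)$ coupling matrix. Summing the geometric series
\begin{align*}
\sum_{\ell=1}^{L} \cO\!\bigl(N/2^{\ell-1}\bigr)\,(q+1)^2 \;\le\; \cO(N q^2),
\end{align*}
yields the desired bound. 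Combining with (iii)--(iv) and the nearfield estimate establishes the general bound $\cO\bigl(N(p^2+q^2)\bigr)$.

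For the convolution case \eqref{eq:5}, the additional translation invariance $k(t,s)=k(t-s)$ together with \eqref{eq:phi_invariant} implies that the coupling matrix $k_{i,j}^{(m,n;\ell)}$ depends only on the level $\ell$ and on the relative offset $m-n$, not on the absolute position. By the admissibility rule in \eqref{eq:ATh}, only $\cO(1)$ distinct offsets occur on each level, so the farfield information collapses to $\cO(\log_2 N)$ distinct $(q+1)\times(q+1)$ coupling matrices, contributing $\cO(\log_2(N)\, q^2)$. Likewise, by \eqref{eq:psi_invariant} the nearfield blocks $K^{m,m-1}$ and $K^{m,m}$ are independent of $m$ and reduce to two $(p+1)\times(p+1)$ matrices, giving $\cO(p^2)$. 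Adding the $\cO(q^2)+\cO(pq)$ contributions from the $A^{(i)}$, $P$, and $Q$ yields the sharpened bound $\cO\bigl(p^2+\log_2(N)\, q^2\bigr)$.

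The only mildly delicate step is the counting of admissible blocks per level, which requires unpacking the condition $I^{(m;\ell)}\cap I^{(n;\ell)}=\emptyset$ with $I^{(\lceil m/2\rceil;\ell+1)}\cap I^{(\lceil n/2\rceil;\ell+1)}\neq\emptyset$ from \eqref{eq:ATh}; once one observes that this forces $|m-n|\in\{2,3\}$ on each level (as visible in \Cref{fig:adaptive}), both the $\cO(N/2^{\ell-1})$ estimate and the $\cO(1)$-distinct-offsets claim for the convolution case follow directly.
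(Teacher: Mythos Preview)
Your proposal is correct and follows essentially the same approach as the paper: a level-by-level count of farfield blocks summed via a geometric series (which the paper handles by pointing back to the proof of \cref{lem:evalcomplexity}), together with the block-Toeplitz observation for the convolution case. Your version is considerably more detailed than the paper's two-sentence proof---in particular, the explicit verification that the admissibility condition forces $|m-n|\in\{2,3\}$ is a useful concrete step that the paper leaves implicit.
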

\begin{proof}
The proof for the adaptive approximation is similar to the previous lemma, with the $p^2$-related term arising from the nearfield and the $q^2$-related term from the farfield. For a kernel of convolution type, the hierarchical approximation provides a block Toeplitz structure, such that we only have to store $\cO(1)$ coefficient matrices per level for the farfield and $\cO(1)$ coefficient matrices for the nearfield.
\hfill \qed
\end{proof}

Let us finally also remark on the additional memory required during execution.

\begin{lemma}\label{lem:memory}
The active memory required for storing the data history required for \Cref{alg:fastconvolution} is bounded by $\cO(q \log_2N+p)$.
\end{lemma}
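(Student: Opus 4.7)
The plan is to prove the memory bound by a direct inspection of the buffers declared in Algorithm \ref{alg:fastconvolution} and observing that at any given time step $m$ only a small number of them are simultaneously alive.

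First I would isolate the three families of auxiliary buffers named explicitly in the paragraph preceding the lemma, namely $\ttf^{(i)}$, $\ttu^{(\ell)}$, and $\ttg^{(i;\ell)}$ for $i\in\{1,2\}$ and $\ell=1,\ldots,L$ with $L=\log_2 N$. Each $\ttf^{(i)}$ stores coefficients of a piecewise polynomial of degree $p$ on a single interval $I^n$ and therefore occupies $p+1$ floating point numbers; since only $\ttf^{(1)}$ and $\ttf^{(2)}$ are ever used simultaneously (the algorithm only references $f^{m-1}$ and $f^m$ in line 21), the total storage contributed by the $\ttf$-buffers is $\cO(p)$.

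Next I would argue that for each coarsening level $\ell$ only one vector $\ttu^{(\ell)}$ and two vectors $\ttg^{(1;\ell)},\ttg^{(2;\ell)}$ have to be retained. This is exactly the content of the observation made before the lemma statement, which follows from the structure of the outer loop: once $\ttu^{(\ell)}$ has been used to update $\ttu^{(\ell-1)}$ via the transfer matrix $A^{(B(m;\ell))}$ in line 16, the old content of $\ttu^{(\ell)}$ can be overwritten in the next time step; and similarly, the pair $\ttg^{(1;\ell)},\ttg^{(2;\ell)}$ is sufficient because the partition \eqref{eq:farfieldsplitting} places at most $B(m;\ell)\in\{1,2\}$ cells per row on level $\ell$. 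Each of these vectors represents an element of $\cP_q(I^{(n;\ell)})$ in the basis $\{\varphi^{(n;\ell)}_i\}_{i=0}^{q}$ and thus has $q+1$ entries.

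Summing over the at most $L=\log_2 N$ coarsening levels therefore yields
\begin{align*}
3(q+1)L + 2(p+1) = \cO(q\log_2 N + p)
\end{align*}
for the active memory, which is the claim. The only non-routine point is the verification that, for every $m$, the content of $\ttu^{(\ell)}$ and of $\ttg^{(i;\ell)}$ on coarser levels remains valid until it is actually consumed. This is the one step I would spell out carefully: it hinges on the fact that the indices $P(m,n;\ell)$ and $C(n;\ell)$ in \eqref{eq:farfieldsplitting} change monotonically with $m$, so that whenever $B(m;\ell)\neq B(m-1;\ell)$ triggers an update on level $\ell$ (line 4), the previously stored $\ttg$ corresponds exactly to the cell that is still required on the next, finer row of the hierarchical mesh $\ATh$. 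All remaining bookkeeping is a routine counting argument.
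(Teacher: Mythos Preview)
Your proof is correct and follows essentially the same counting argument as the paper's own proof, which simply notes that $\cO(1)$ vectors of length $p$ are needed for the nearfield and at most two vectors $\ttg^{(i;\ell)}$ of length $q$ on each of the $L=\log_2 N$ levels for the farfield. Your version is more explicit in also accounting for the $\ttu^{(\ell)}$ buffers and in justifying why two $\ttg$-vectors per level suffice, but the underlying idea is identical.
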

\begin{proof}
We require $O(1)$ vectors of length $p$ for the nearfield and at most two vectors $g^{(n;\ell)}$ of length $q$ on  $L=\log_2(N)$ levels for the farfield contributions. 
\hfill \qed
\end{proof}

\subsection{Summary} 
In this section, we discussed the adaptive hierarchical data sparse approximation for the dense system matrix $\ttK$ in \eqref{eq:algebraic} stemming from a uniform polynomial-based discretization of the Volterra-integral operators \eqref{eq:integral}. 
This approximation amounts to an $\cH^2$-matrix compression of the system matrix, leading to $\cO(N)$ storage complexity for general and $\cO(\log(N))$ storage complexity for convolution kernels. 
Using a multilevel basis representation on the hierarchy of the time intervals, we formulated a fast and oblivious evolutionary algorithm for the numerical evaluation of Volterra integrals \eqref{eq:integral}.
The overall complexity for computing the matrix-vector product in \eqref{eq:algebraic} is $\cO(N)$ and only $\cO(\log(N))$ memory is required to store the compressed history of the data. 
The algorithm is executed in an oblivious and evolutionary manner and can therefore be generalized immediately to integro-differential equations of the form \eqref{eq:integrodiff}. Moreover, knowledge of the number of time steps $N$ is not required prior to execution.


\section{Approximation of convolution operators}\label{sec:frequency}

While our algorithm for the fast evaluation of Volterra integral operators is based on a time domain approximation, 
in many interesting applications, see  \cite{Lub2004} for examples and references, 
the kernel function in \eqref{eq:volterra} is of convolution type
\begin{align}\label{eq:lubichconvolutionkernel}
k(t,s)=k(t-s)
\end{align}
and only accessible indirectly via its Laplace transform, i.e., the transfer function
\begin{align*}
\K(s)\isdef(\mathcal{L}k)(s)\isdef\int_0^\infty e^{-st}k(t)\d t,\quad s\in\CC.
\end{align*}
Let us note that, at least formally, the evaluation of the kernel function in the time domain can 
be achieved by the inverse Laplace transform
\begin{align}\label{eq:invlaplace}
k(t)=(\mathcal{L}^{-1} \K)(t)=\frac{1}{2\pi i}\int_{\Gamma}e^{t\lambda} \K(\lambda)\d\lambda,\quad t>0,
\end{align}
where $\Gamma$ is an appropriate contour connecting $-i\infty$ with $i\infty$; see \cite{ABHN2011} for details. 
To ensure the existence of the inverse Laplace transform, we require that 
\begin{align}
\label{eq:sector}
\text{
$\K(\lambda)$ is analytic in a sector $|\arg(\lambda-c)|<\varphi$, $\frac{\pi}{2}<\varphi<\pi$,}\\
\label{eq:Kdecay}
\text{
and $|\K(\lambda)|\leq M |\lambda|^{-\mu}$ for some fixed $M,\mu>0$,
}
\end{align}
and tacitly assume that the contour $\Gamma$ lies within the domain of analyticity of the function $\hat{k}$.
In this section, we show that with some minor modifications, \Cref{alg:fastconvolution} and our analysis are applicable 
also in this setting and we compare our method with the \emph{fast and oblivious convolution quadrature} 
of \cite{LS2002,SLL2006}.

\subsection{Approximation and numerical realization} 

As a first step, we show that the convolution kernel $k$ given implicitly by \eqref{eq:invlaplace} indeed satisfies the assumption \eqref{eq:smooth} on asymptotic smoothness. 
Thus an accurate adaptive hierarchical approximation as discussed in \Cref{sec:fo} is feasible.
\begin{lemma}\label{lem:isas}
Assume that $\K$ satisfies \eqref{eq:sector} and \eqref{eq:Kdecay}. Then $k$ as defined in \eqref{eq:invlaplace} is asymptotically smooth, i.e., it satisfies \eqref{eq:smooth} with $c_2=\sin(\varphi-\pi/2)$.
\end{lemma}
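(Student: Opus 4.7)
The plan is to reduce the two-variable bound to a single-variable statement on the derivatives of $k$, and then to obtain sharp factorial growth via an optimized contour in the inversion formula \eqref{eq:invlaplace}. Because $k(t,s)=k(t-s)$, the chain rule gives $\partial_t^\alpha \partial_s^\beta k(t-s) = (-1)^\beta k^{(\alpha+\beta)}(t-s)$, so it suffices to establish
\[
|k^{(n)}(\tau)| \le c_1\,\frac{n!}{c_2^n}\,\tau^{r-n}, \qquad n\ge 0,\ \tau>0,
\]
with $c_2=\sin(\varphi-\pi/2)$. Differentiating \eqref{eq:invlaplace} under the integral, which is justified a posteriori by absolute convergence on the deformed contour, yields $k^{(n)}(\tau) = \frac{1}{2\pi i}\int_\Gamma e^{\tau\lambda}\lambda^n \K(\lambda)\,d\lambda$.

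Using the analyticity from \eqref{eq:sector} and Cauchy's theorem, I would deform $\Gamma$ to a Hankel-type contour inside the sector consisting of the two rays $\lambda = c+\rho e^{\pm i\varphi}$, $\rho\ge\rho_0$, joined by the arc $|\lambda-c|=\rho_0$. Since $\cos\varphi = -\sin(\varphi-\pi/2)$, on the rays
\[
|e^{\tau\lambda}| = e^{\tau c}\exp\bigl(-\tau\rho\sin(\varphi-\pi/2)\bigr),
\]
so the integrand decays exponentially in $\rho$ at exactly the announced rate $c_2$. Bounding $|\lambda|^n|\K(\lambda)| \le M(|c|+\rho)^{n-\mu}$ on the rays and substituting $u=\tau\rho\sin(\varphi-\pi/2)$ turns each ray contribution into a Gamma integral of order $n-\mu+1$, of size $\Gamma(n-\mu+1)\bigl(\tau\sin(\varphi-\pi/2)\bigr)^{-(n-\mu+1)}$. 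Choosing $\rho_0$ of order $n/\tau$ ensures that the arc contribution, which carries a factor $e^{\tau\rho_0}$ of size $e^{\mathrm{const}\cdot n}$, is dominated by the rays. Applying the standard estimate $\Gamma(n-\mu+1)\le C_\mu\, n!$ and absorbing $e^{\tau c}$, which is bounded on any fixed time interval $[0,T]$, into the constant $c_1$ yields the inequality \eqref{eq:smooth} with $c_2=\sin(\varphi-\pi/2)$ and $r=\mu-1$.

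The main obstacle is the delicate behaviour of $\K$ near $\lambda=c$: the bound \eqref{eq:Kdecay} may blow up there when $\mu>0$, forcing the contour to stay at a positive distance $\rho_0$ from $c$. The radius $\rho_0$ then has to be tuned so that the factorial growth $n!$ appears with the sharp rate $c_2=\sin(\varphi-\pi/2)$ rather than a smaller constant; this is exactly the balance realized by the choice $\rho_0 \sim n/\tau$. The calculation is otherwise standard in the analysis of inverse Laplace integrals for sectorial operators, and no additional smoothness or integrability on $\K$ beyond \eqref{eq:sector}--\eqref{eq:Kdecay} is needed.
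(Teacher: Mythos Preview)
Your argument is correct but takes a genuinely different route from the paper. The paper first reduces to $c=0$ and $\mu=1$ via the substitutions $\K(\lambda+c)=\cL(e^{-ct}k)(\lambda)$ and $\K_*(\lambda)=|\lambda|^{\mu-1}\K(\lambda)$, and then invokes a classical Laplace--inversion theorem (Theorem~2.6.1 in \cite{ABHN2011}, going back to Sova) which guarantees that $k$ extends holomorphically to the sector $|\arg z|<\varphi-\pi/2$. Since the distance from any $t>0$ to the boundary of this sector equals $t\sin(\varphi-\pi/2)$, the derivative bound then drops out of Cauchy's estimate for Taylor coefficients, and the chain rule finishes the two--variable statement. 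Your approach is instead a direct quantitative estimate on the contour integral for $k^{(n)}$, producing the factorial and the rate $c_2$ explicitly through a Gamma integral along the rays. Both arguments ultimately exploit the same geometric fact---the exponential decay rate of $e^{\tau\lambda}$ along a ray at angle $\varphi$ is $\sin(\varphi-\pi/2)$---but the paper's packaging via holomorphic extension is shorter and sidesteps the bookkeeping around the arc radius. Your hands--on computation has the advantage of delivering the exponent $r=\mu-1$ directly rather than through a separate reduction, and it makes the dependence of the constants on $\varphi$ and $\mu$ fully explicit. One small caveat on your arc estimate: for it to be dominated by the rays with the sharp constant $c_2$, the proportionality constant in $\rho_0\sim n/\tau$ must be chosen small enough (so that $b c_2 e^{b+1}<1$); alternatively, for $n>\mu-1$ the ray integrals already converge at the vertex and the arc can be dispensed with entirely.
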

\begin{proof}
It is sufficient to consider the case $c=0$ in \eqref{eq:sector} and $\mu= 1$ in \eqref{eq:Kdecay}. Otherwise, we simply transform $\K(\lambda+c)=\cL(e^{-ct}k(t))(\lambda)$ and $k(t)=k_*^{(\mu-1)}(t)$ with $\K_*(\lambda)\isdef\cL(k_*)(\lambda)=|\lambda|^{\mu-1}\K(\lambda)$ for $\mu\neq 1$.
From \cite[Theorem 2.6.1]{ABHN2011}, also see \cite{Sov1979}, we deduce that $k$ has a holomorphic extension into the sector $|\arg(\lambda)|<\varphi-\pi/2$ with $\varphi$ as in \eqref{eq:sector}. Thus, the radius of convergence of the Taylor series of $k$ around $t_0\in(0,\infty)$ is given by $c_2 t_0$, with $c_2=\sin(\varphi-\pi/2)$ independent of $t_0$. This implies
\begin{align*}
\big|\partial_t^{\alpha}k(t)\big|
\leq
c_1
\frac{\alpha!}
{c_2^{\alpha}t^{\alpha}}
\end{align*}
for some constant $c_1>0$. Condition \eqref{eq:smooth} then follows by the chain rule. 
\hfill \qed
\end{proof}

For the construction of the adaptive approximation $k_h$, we can now proceed in complete analogy to \eqref{eq:split}, i.e., we split the convolution integral 
\begin{align}\label{eq:splitconvolution}
    y_h(t^n) = \int_0^{t^{n-2}} k_h(t^n,s) f_h(s) \d s+\int_{t^{n-2}}^{t^n} k_h(t^n,s) f_h(s) \d s
\end{align}
into a farfield and a nearfield contribution. 
The latter can be computed stably and efficiently with Lubich's convolution quadrature; see \cite{Lub1988,Lub1988a} for details. 

For the farfield contributions, we utilise 
the adaptive hierarchical approximation discussed in the previous section. 
If direct access to the kernel $k(t,s)=k(t-s)$ is available, \Cref{alg:fastconvolution} can be applied directly. 
If, on the other hand, only the transfer function $\K(s)$ is accessible, the values of $k_h(t,s)=k(t-s)$ can be computed by fast numerical Laplace inversion; see \cite{DW2015,LPS2006,Tal1979}. 
Here we follow the approach of \cite{LPS2006,SLL2006} which is based on hyperbolic contours of the form 
\begin{align}\label{eq:contour}
\gamma(\theta) = \mu(1-\sin(\alpha + i\theta)) + \sigma,\qquad\theta\in\RR,
\end{align}
with $0<\mu$, $0<\alpha<\pi/2-\varphi$, and $\sigma\in\RR$, such that the contour remains in the sector of analyticity \eqref{eq:sector} of $\K$.   
The discretisation of the contour integral \eqref{eq:invlaplace} by the trapezoidal rule with uniform step with $\tau$ yields
\begin{align}\label{eq:invlaplacequadrature}
k(t) \approx \sum_{r=-R}^R \frac{i\tau}{2\pi}e^{\gamma(\theta_r)t}\gamma'(\theta_r) \K(\gamma(\theta_r)), 
\end{align}
with $\theta_r=\tau r$. Given we are interested in $k(t)$ for $t\in [t_{\min},t_{\max}]$ and have fixed values for $\alpha$ and $\sigma$, suitable parameters $\tau$ and $\mu$ are given by
\[
\tau = a_\rho(\rho_{\text{opt}}),
\quad
\mu = \frac{2\pi \alpha R (1-\rho_{\text{opt}})}{t_{\max} a_\rho(\rho_{\text{opt}})},
\quad
\rho_{\text{opt}} = \argmin_{\rho\in (0,1)} \, \left(\eps\eps_R (\rho) ^{\rho-1} + \eps_R(\rho)^\rho\right),
\]
where $\eps$ is the machine precision and
\[ 
a_\rho (\rho) = \acosh\left(\frac{t_{\max}/t_{\min}}{(1 - \rho)\sin(\alpha)}\right),\quad \eps_R(\rho) = \exp\left(-\frac{2\pi \alpha R}{a_\rho(\rho)}\right),
\]
see \cite{LPS2006,SLL2006}. In our examples in \Cref{sec:numerics} we chose $\alpha = 3/16\pi$, $\sigma = 0$.
For error bounds concerning this approach we refer to \cite{LPS2006,SLL2006}.


\subsection{Comparison with fast and oblivious convolution quadrature}

Similar to \Cref{alg:fastconvolution}, the \emph{fast and oblivious convolution quadrature} (FOCQ) method of \cite{LS2002,SLL2006} is also based on a splitting \eqref{eq:splitconvolution} of the convolution integral into nearfield and farfield contributions, and the former can again be computed stably and efficiently with Lubich's convolution quadrature \cite{Lub1988,Lub1988a}. 

A different adaptive hierarchic approximation based on L-shaped cells is now used for the approximation in the farfield; see \Cref{fig:fastandoblivious}. 
\begin{figure}[ht!]
    \centering
    \begin{tikzpicture}[scale=0.4]

\foreach \x in {0,...,3} {
\draw[fill=gray] (2*\x,30-2*\x) rectangle (2+2*\x,32-2*\x);
}
\foreach \x in {0,...,2} {
\draw[fill=gray] (2*\x,28-2*\x) rectangle (2+2*\x,30-2*\x);
}

\foreach \x in {0,...,0} {
\draw (4*\x,24-4*\x) -- (4+4*\x,24-4*\x) -- (4+4*\x,26-4*\x) -- (2+4*\x,26-4*\x) -- (2+4*\x,28-4*\x) -- (4*\x,28-4*\x) -- cycle;
}



\end{tikzpicture}
    \hspace*{1em}
    \begin{tikzpicture}[scale=0.2]

\foreach \x in {0,...,7} {
\draw[fill=gray] (2*\x,30-2*\x) rectangle (2+2*\x,32-2*\x);
}
\foreach \x in {0,...,6} {
\draw[fill=gray] (2*\x,28-2*\x) rectangle (2+2*\x,30-2*\x);
}

\foreach \x in {0,...,2} {
\draw (4*\x,24-4*\x) -- (4+4*\x,24-4*\x) -- (4+4*\x,26-4*\x) -- (2+4*\x,26-4*\x) -- (2+4*\x,28-4*\x) -- (4*\x,28-4*\x) -- cycle;
}

\foreach \x in {0,...,0} {
\draw (8*\x,16-8*\x) -- (8+8*\x,16-8*\x) -- (8+8*\x,20-8*\x) -- (4+8*\x,20-8*\x) -- (4+8*\x,24-8*\x) -- (8*\x,24-8*\x) -- cycle;
}


\end{tikzpicture}
    \hspace*{1em}
    \begin{tikzpicture}[scale=0.1]

\foreach \x in {0,...,15} {
\draw[fill=gray] (2*\x,30-2*\x) rectangle (2+2*\x,32-2*\x);
}
\foreach \x in {0,...,14} {
\draw[fill=gray] (2*\x,28-2*\x) rectangle (2+2*\x,30-2*\x);
}

\foreach \x in {0,...,6} {
\draw (4*\x,24-4*\x) -- (4+4*\x,24-4*\x) -- (4+4*\x,26-4*\x) -- (2+4*\x,26-4*\x) -- (2+4*\x,28-4*\x) -- (4*\x,28-4*\x) -- cycle;
}

\foreach \x in {0,...,2} {
\draw (8*\x,16-8*\x) -- (8+8*\x,16-8*\x) -- (8+8*\x,20-8*\x) -- (4+8*\x,20-8*\x) -- (4+8*\x,24-8*\x) -- (8*\x,24-8*\x) -- cycle;
}

\draw (0,0) -- (16,0) -- (16,8) -- (8,8) -- (8,16) -- (0,16) -- cycle;

\end{tikzpicture}
    \caption{Hierarchical partitions of fast and oblivious convolution quadrature~\cite{SLL2006}.
    \label{fig:fastandoblivious}}
\end{figure}
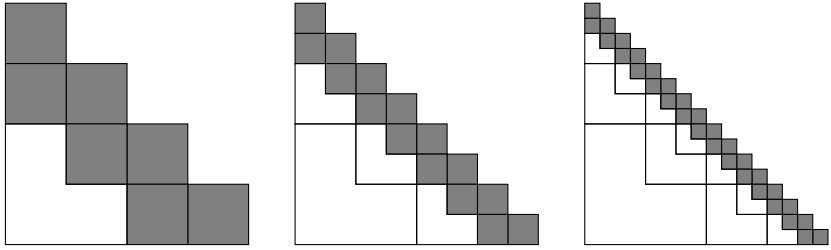
The farfield part of the integration domain for computing the entry $y_h(t^m)$ is then partitioned into
\begin{align*}
    [0,t^{m-2}]
    =
    \bigcup_{n=1}^{m-2}I^n
    =
    \bigcup_{\ell=1}^{L(m)}\bigcup_{n=1}^{B(m;\ell)}I^{(P(m,n;\ell);\ell)}
    =
    \bigcup_{\ell=1}^{L(m)}I_{\text{FOCQ},m}^{\ell}.
\end{align*}
Choosing an appropriate contour $\Gamma_{\ell}$, see \eqref{eq:contour}, and corresponding quadrature points $\theta_r^{(\ell)}$ for each farfield cell and using \eqref{eq:invlaplacequadrature} yields an approximation 
\begin{align}\label{eq:FOCQfarfield}
&\int _{I_{\text{FOCQ},m}^\ell}k\big(t^m,s\big) f(s)\d s\\
\approx{}&\frac{i\tau}{2\pi}\sum_{r=-R}^R\K\big(\gamma(\theta_r^{(\ell)})\big)\gamma'\big(\theta_r^{(\ell)}\big)e^{\gamma(\theta_r^{(\ell)})(t^m-b^{(\ell)})}\underbrace{\int _{I_{\text{FOCQ},n}^\ell}e^{\gamma(\theta_r^{(\ell)})(b^{(\ell)}-s)}f(s)\d s}_{=z(c^{(\ell)};b^{(\ell)},\gamma(\theta_r^{(\ell)}))}, \notag
\end{align}
with $b^{(\ell)}=\min I_{\text{FOCQ},m}^\ell$ and $c^{(\ell)}=\max I_{\text{FOCQ},m}^\ell$. The values $z(c^{(\ell)};b^{(\ell)},\gamma(\theta_r^{(\ell)}))$ can be computed by numerically solving the ordinary differential equation
\begin{align}\label{eq:lubichode}
\frac{d}{dt}z(t;s,\gamma) = \gamma z(t;s,\gamma) + f(t),\qquad z(s;s,\gamma) = 0
\end{align}
with appropriate values $s=b^{(\ell)}$ and $\gamma=\gamma(\theta_r^{(\ell)})$.  
Thus, the fast and oblivious convolution quadrature provides an approximation of the convolution matrix by solving an auxiliary set of $(2R+1)L$ differential equations. 
In order to obtain an oblivious algorithm it is crucial that the solution of each differential equation is updated in each time step, i.e., the compressed convolution matrix must be evaluated from \emph{left to right}; see \cite{LS2002,SLL2006} for details.

The connection to our approach is revealed upon noticing that the compression approach underlying the fast and oblivious convolution quadrature actually implements a low-rank approximation in each of the farfields L-shaped blocks, i.e.,
\begin{align*}
k(t,s)\approx{}&\sum_{r=-R}^R\bigg(\frac{i\tau}{2\pi}e^{\gamma(\theta_r^{(\ell)})(t-b^{(\ell)})}\K\big(\gamma\big(\theta_r^{(\ell)}\big)\big)\gamma'\big(\theta_r^{(\ell)}\big)\bigg) e^{\gamma(\theta_r^{(\ell)})(b^{(\ell)}-s)}\\
={}&\sum_{r=-R}^RU(t,\theta_r^{(\ell)})V(s,\theta_r^{(\ell)}).
\end{align*}
The corresponding farfield approximation \eqref{eq:FOCQfarfield} thus effectively reads
\begin{align*}
\int _{I_{\text{FOCQ},m}^\ell}k\big(t^{n},s\big) f(s)\d s
\approx{}&
\sum_{r=-R}^RU\big(t,\theta_r^{(\ell)}\big)\int _{I_{\text{FOCQ},m}^\ell}V\big(s,\theta_r^{(\ell)}\big)f(s)\d s\\
={}&
\sum_{r=-R}^RU\big(t,\theta_r^{(\ell)}\big)z\big(c^{(\ell)},b^{(\ell)},\theta_r^{(\ell)}\big),
\end{align*}
which can be understood as a low-rank matrix-vector product realized implicitly by the numerical solution of a differential equation.
Since the partitioning depicted in \Cref{fig:fastandoblivious} can  easily be refined to an adaptive partitioning as in \Cref{fig:adaptive}, the fast and oblivious convolution quadrature can be interpreted as a particular case of an $\cH$-matrix approximation with a particular realisation of the $\cH$-matrix-vector product. 
The $\cO(\log(N))$ memory cost and $\cO(N\log(N))$ complexity of the algorithm can then be immediately deduced from $\cH$-matrix literature \cite{Bor2010,Hac2015}. 

As mentioned in the introduction, the algorithm proposed in \Cref{sec:adaptive}, with the modifications discussed above, is based on an $\cH^2$-matrix approximation and leads to a better complexity of $\cO(N)$.
It is also clear that the number of quadrature points for the numerical Laplace inversion determines the ranks of the far-field approximations for the $\cH$-matrix approximations, which allows for an improved understanding of the approximation error in terms of the approximation order.

\section{Numerical examples}\label{sec:numerics}

In the following we present a series of numerical examples to illustrate and verify the capabilities of our novel algorithms. The experiments are performed in {\sc Matlab}, in which we implemented our new algorithm as well as a version of the fast-and-oblivious convolution-quadrature algorithm of \cite{LPS2006,LS2002} for reference. 
Although our new algorithm performed considerably faster in all of the following examples, we do not present a detailed comparison.

In accordance with the $\cH$- and $\cH^2$-literature, we require the farfield cells in our implementation to be at least $n_{\min}\times n_{\min}=16\times 16$ times larger than the nearfield cells. 
This simply means that the cells in \Cref{fig:adaptive} represent $16 \times 16$ blocks of fine-grid cells.
Following \cite[Chapter 2]{Bru2004}, we choose Radau-IIA collocation methods of stage $p=1,2,3$, for the discretization of the Volterra integral operators, which is exactly the scheme used for the approximation as outlined in \Cref{sec:uniform} and the beginning of \Cref{sec:uniform2}. This fixes the approximation spaces $\cP_p(\Th)$ for the solution $y_h$ and the data $f_h$.
The error of this discretization scheme is given by
\begin{align}\label{eq:brunner}
e_h\defis\max_{t_i\in\Th}|y(t_i)-y_h(t_i)|\leq Ch^{2p-1}
\end{align}
for smooth data $f\in C^{2p-1}([0,T])$ and kernel $k\in C^{2p-1}(\{(t,s)\colon 0\leq t\leq s\leq T\})$; see \cite[Chapter 2]{Bru2004} for details. 
For convolution kernels $k(t-s)$, we utilize Lubich's convolution quadrature \cite{Lub1988,Lub1988a} in the near field and the same strategy as above in the farfield. 
Again, the Radau-IIA method is used as the underlying integration scheme. discretization scheme in the near field is that of. This allows to estimate the approximation error by
\begin{align}\label{eq:lubicherror}
e_h\defis\max_{t_i\in\Th}|y(t_i)-y_h(t_i)|\leq C(h^{2p-1}+h^{p+1+\mu})
\end{align}
for transfer functions satisfying \eqref{eq:sector}--\eqref{eq:Kdecay} and data $f\in C^{2p-1}([0,T])$; see \cite{LO1993} for details. 
The parameter $\mu$ is related to the singularity of the kernel $k(t-s)$ at $t=s$.  

\subsection{Variation of constants formula}
\label{sec:variation_of_constants}

The first example is dedicated to the solution of the differential equation
\begin{align}
y'(t)&=-2t y(t)+5\cos(5t), \qquad t\in(0,10], \label{eq:lpe1}\\
y(0)&=2. \label{eq:lpe2}
\end{align}
By the variation of constants formula, the solution can be expressed as 
\begin{align} \label{eq:lpe3}
y(t)= 2e^{-t^2} + 5\int_0^t e^{s^2-t^2}\cos(5s)\d s. 
\end{align}
Let us note that the integral kernel $k(t,s)=e^{s^2-t^2}$ satisfies the asymptotic smoothness assumption \eqref{eq:smooth}, but is not of convolution type $k(t-s)$.
To obtain a reference solution, we solve \eqref{eq:lpe1}--\eqref{eq:lpe2} numerically with a 3-stage Radau-IIA method and with $N_\infty=2^{19}$ time steps.
For the numerical solution of \eqref{eq:lpe3}, we then employ \Cref{alg:fastconvolution} with polynomial degree $q=16$ for the kernel approximation and various degrees $p$ for the approximation of the data $f$ and the solution $y$.
\begin{figure}[ht]
\centering
\begin{subfigure}[hb]{.42\textwidth}
\centering
\begin{tikzpicture}[scale = 0.75]
\begin{loglogaxis}[
    xlabel={time step $h$},
    ylabel={$e_h$},
    grid=major,
    legend entries={$p=1$,$p=2$,$p=3$, $O(h^{2p-1})$},
    legend style={at={(1.135,0.4)}}
]
\addplot[mark=none, red!70, thick] coordinates {
(0.312500000000000,1.369822673744032)
(0.156250000000000,0.668715313877263)
(0.078125000000000,0.331693081958774)
(0.039062500000000,0.166062142926573)
(0.019531250000000,0.082637928628005)
(0.009765625000000,0.041204522652333)
(0.004882812500000,0.020539790209097)
(0.002441406250000,0.010222233462522)
(0.001220703125000,0.005067157013725)
(0.000610351562500,0.002490614427916)
(0.000305175781250,0.001202711752165)
(0.000152587890625,0.000559132364294)
(0.000076293945312,0.000238205198068)
};
\addplot[mark=none, green!70, thick] coordinates {
(0.312500000000000,0.169392341224405)
(0.156250000000000,0.032499608302425)
(0.078125000000000,0.004557191000023)
(0.039062500000000,0.000590053977974)
(0.019531250000000,0.000073690588495)
(0.009765625000000,0.000009179525763)
(0.004882812500000,0.000001144589456)
(0.002441406250000,0.000000142873497)
(0.001220703125000,0.000000017854584)
(0.000610351562500,0.000000002259280)
(0.000305175781250,0.000000000731205)
(0.000152587890625,0.000000000865029)
(0.000076293945312,0.000000001048062)
};
\addplot[mark=none, blue!70, thick] coordinates {
(0.312500000000000,0.015104694620772)
(0.156250000000000,0.000877220401696)
(0.078125000000000,0.000033356786457)
(0.039062500000000,0.000001102733895)
(0.019531250000000,0.000000034734007)
(0.009765625000000,0.000000001087381)
(0.004882812500000,0.000000000158703)
(0.002441406250000,0.000000000299614)
(0.001220703125000,0.000000000454309)
(0.000610351562500,0.000000000589663)
(0.000305175781250,0.000000000712974)
(0.000152587890625,0.000000000862844)
(0.000076293945312,0.000000001047379)

};
\addplot [dashed,domain=0.00008:0.3, samples=100]{15*x};
\addplot [dashed,domain=0.0004:0.3, samples=100]{30*x^3};
\addplot [dashed,domain=0.008:0.3, samples=100]{2*x^5};
\end{loglogaxis}
\end{tikzpicture}
\label{fig:dvc_convergence}
\end{subfigure}
\qquad
\begin{subfigure}[hb]{.42\textwidth}
\centering
\begin{tikzpicture}[scale = 0.75]
\begin{loglogaxis}[
    xlabel={number of time steps $N$},
    ylabel={online computation time},
    grid=major,
    legend entries={$p=1$,$p=2$,$p=3$, $O(N)$},
    legend pos=south east
]
\addplot[mark=none, red!70, thick] coordinates {
(32,     0.027195)
(64,     0.013032)
(128,    0.027283)
(256,    0.017883)
(512,    0.028639)
(1024,   0.060355)
(2048,   0.101658)
(4096,   0.206940)
(8192,   0.414129)
(16384,  0.806738)
(32768,  1.596351)
(65536,  3.206470)
(131072, 6.656376)
};
\addplot[mark=none, green!70, thick] coordinates {
(32,     0.026424)
(64,     0.013866)
(128,    0.025768)
(256,    0.019032)
(512,    0.031962)
(1024,   0.067440)
(2048,   0.122255)
(4096,   0.236062)
(8192,   0.472374)
(16384,  0.934831)
(32768,  1.864681)
(65536,  3.722699)
(131072, 7.602634)
};
\addplot[mark=none, blue!70, thick] coordinates {
(32,     0.034433)
(64,     0.015367)
(128,    0.029937)
(256,    0.021045)
(512,    0.032189)
(1024,   0.069012)
(2048,   0.122123)
(4096,   0.253296)
(8192,   0.558383)
(16384,  0.950371)
(32768,  1.853228)
(65536,  3.752174)
(131072, 7.631583)
};
\addplot [dashed,domain=300:131072, samples=100]{0.0001*x};
%
\end{loglogaxis}
\end{tikzpicture}
\label{fig:dvc_times}
\end{subfigure}
\caption{Approximation errors (left) and computation times (right) for the variation of constants formula example of \Cref{sec:variation_of_constants}.
\label{fig:lpe}}
\end{figure}
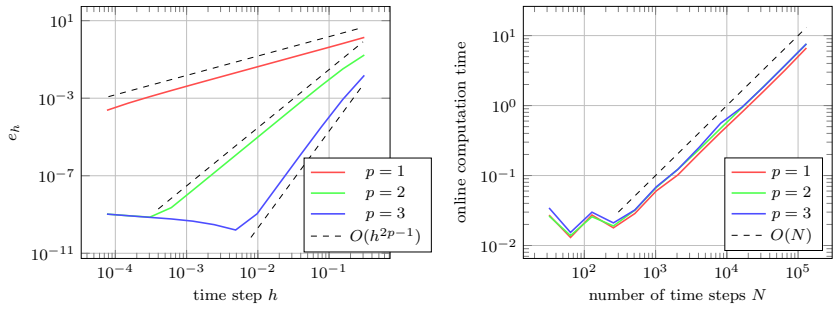
The left plot of \Cref{fig:lpe} illustrates that we indeed reach the theoretical convergence rates predicted by \eqref{eq:brunner} up to a tolerance of around $10^{-10}$ at which numerical noise begins to dominate.
From the right plot one can immediately deduce the linear complexity of the algorithm.

\subsection{Nonlinear Volterra integral equation} \label{sec:nonlinearvolterra}

We continue with a second test example taken from \cite{SLL2006}, in which we consider the nonlinear Volterra integral equation   
\begin{align*}
u(t) = -\int_0^t \frac{(u(\tau) - \sin(\tau))^3}{\sqrt{\pi(t-\tau)}} \d\tau,\qquad t\in[0,60].
\end{align*}
In this example, the convolution kernel $k(t-s) = 1/\sqrt{\pi (t-s)}$ is of convolution type, with Laplace transform given by $\K(\lambda)=1/\sqrt{\lambda}$. 
The evaluation of the integral kernel $k(t-s)$ is realized via numerical inverse Laplace transforms with $R=15$ quadrature points and the kernel is approximated by piecewise polynomials of degree $q=8$ in the farfield; see \Cref{sec:frequency}.
Since the data $f(t,u)=(u-\sin(t))^3$ here depend on $u$, a nonlinear equation must be solved for each time step, for which we employ a Newton method up to a tolerance of $10^{-12}$. 
As a reference solution for computing the errors, we here simply take the numerical solution computed on a finer grid.
\begin{figure}[ht]
\centering
\begin{subfigure}[hb]{.42\textwidth}
\centering
\begin{tikzpicture}[scale = 0.75]
\begin{loglogaxis}[
    xlabel={time step $h$},
    ylabel={$e_h$},
    grid=major,
    legend entries={$p=1$,$p=2$,$p=3$, $O(h^{2p-1})$, ,$O(h^{p+1+\mu})$},
    legend style={at={(1.15,0.5)}}
]
\addplot[mark=none, red!70, thick] coordinates {
(1.875000000000000,0.054972159499600)
(0.937500000000000,0.016742426226874)
(0.468750000000000,0.021092180088456)
(0.234375000000000,0.011565953285715)
(0.117187500000000,0.005825801152449)
(0.058593750000000,0.002672955303885)
(0.029296875000000,0.001240988325126)
(0.014648437500000,0.000650603226118)
(0.007324218750000,0.000339766051144)
(0.003662109375000,0.000175398659346)
(0.001831054687500,0.000085792698965)
(0.000915527343750,0.000033173978889)
};
\addplot[mark=none, green!70, thick] coordinates {
(1.875000000000000,0.019893377198335)
(0.937500000000000,0.008157201154251)
(0.468750000000000,0.000630026576381)
(0.234375000000000,0.000089925389707)
(0.117187500000000,0.000006448123106)
(0.058593750000000,0.000000939667949)
(0.029296875000000,0.000000113482046)
(0.014648437500000,0.000000021681987)
(0.007324218750000,0.000000004124020)
(0.003662109375000,0.000000001498943)
(0.001831054687500,0.000000000971768)
(0.000915527343750,0.000000000490682)
};
\addplot[mark=none, blue!70, thick] coordinates {
(1.875000000000000,0.035822571021985)
(0.937500000000000,0.015518120856720)
(0.468750000000000,0.000221914806123)
(0.234375000000000,0.000008960165742)
(0.117187500000000,0.000000312959706)
(0.058593750000000,0.000000021916342)
(0.029296875000000,0.000000007858108)
(0.014648437500000,0.000000008682709)
(0.007324218750000,0.000000009975433)
(0.003662109375000,0.000000008253390)
(0.001831054687500,0.000000005249710)
(0.000915527343750,0.000000002350192)
};
\addplot [dashed,domain=0.001:0.5, samples=100]{0.2*x};
\addplot [dashed,domain=0.01:0.7, samples=100]{0.02*x^3};
\addplot [dash dot dot,domain=0.075:0.75, samples=100]{0.0025*x^4.5};
\end{loglogaxis}
\end{tikzpicture}
\label{fig:nvi_convergence}
\end{subfigure}
\qquad
\begin{subfigure}[hb]{.42\textwidth}
\centering
\begin{tikzpicture}[scale = 0.75]
\begin{loglogaxis}[
    xlabel={number of time steps $N$},
    ylabel={online computation time},
    grid=major,
    legend entries={$p=1$,$p=2$,$p=3$, $O(N)$},
    legend pos=north west
]
\addplot[mark=none, red!70, thick] coordinates {
(32,     0.075303)
(64,     0.061800)
(128,    0.040439)
(256,    0.023111)
(512,    0.043082)
(1024,   0.022524)
(2048,   0.031244)
(4096,   0.047916)
(8192,   0.081919)
(16384,  0.154839)
(32768,  0.275848)
(65536,  0.543573)
(131072, 1.043702)
};
\addplot[mark=none, green!70, thick] coordinates {
(32,     0.554620)
(64,     0.113481)
(128,    0.054266)
(256,    0.039307)
(512,    0.076629)
(1024,   0.059572)
(2048,   0.096130)
(4096,   0.167859)
(8192,   0.318364)
(16384,  0.607379)
(32768,  1.189554)
(65536,  2.338551)
(131072, 4.628578)
};
\addplot[mark=none, blue!70, thick] coordinates {
(32,     0.093259)
(64,     0.081549)
(128,    0.067727)
(256,    0.052219)
(512,    0.083495)
(1024,   0.081842)
(2048,   0.134730)
(4096,   0.246592)
(8192,   0.450039)
(16384,  0.859248)
(32768,  1.696222)
(65536,  3.365783)
(131072, 6.721478)
};
\addplot [dashed,domain=1024:131072, samples=100]{0.00002*x};
%
\end{loglogaxis}
\end{tikzpicture}
\label{fig:nvi_times}
\end{subfigure}
\caption{Approximation errors (left) and computation times (right) for the nonlinear Volterra integral equation example of \Cref{sec:nonlinearvolterra}.
\label{fig:nvi}
}
\end{figure}
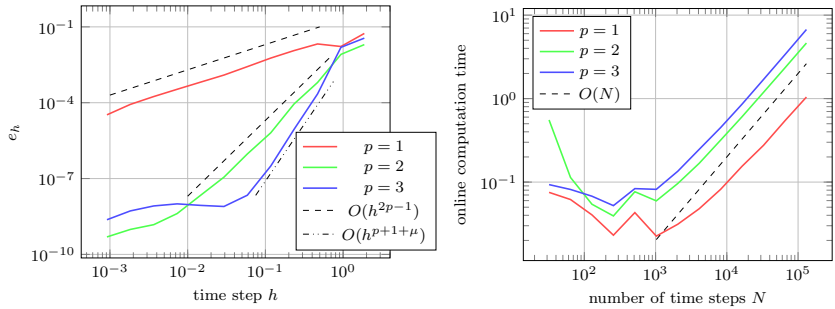
The results of \Cref{fig:nvi} again clearly show the predicted convergence rates up to the point where numerical noise begins to dominate, see
\eqref{eq:lubicherror} with $\mu=1/2$, and the linear complexity of \Cref{alg:fastconvolution}.


\subsection{Fractional diffusion with transparent boundary conditions}\label{sec:fractionaldiffusion}

As a last example, which is taken from \cite{CLP2006,SLL2006}, we consider the one-dimensional fractional diffusion equation
\begin{align*}
u(x,t) = u_0(x) + \int _0^t\frac{(t-\tau)^{\alpha-1}}{\Gamma(\alpha)}\Delta_{x}u(x,\tau)\d\tau + g(x,t),\qquad x\in\RR,~t\in\RR_{>0},
\end{align*}
with $\alpha=2/3$, $u(x,\cdot)\to 0$ for $|x|\to\infty$ and $g(x,0)=0$.
For the computations, we restrict the spatial domain to $x\in(-a,a)$, assume that $u_0$ and $g$ have support in $[-a,a]$, and impose transparent boundary conditions on $x=\pm a$ which read
\begin{align*}
u(x,t)=-\int _0^t\frac{(t-\tau)^{\alpha/2-1}}{\Gamma(\alpha/2)}\partial_{\bfn}u(x,\tau)\d\tau, \qquad x=\pm a;
\end{align*}
we refer to \cite{Hag1999,SLL2006} for further details on the model. 
The Laplace transform of the convolution kernel $k(t-s)=(t-s)^{\alpha-1}/\Gamma(\alpha)$ is here given by $\K(\lambda)=1/\lambda^{\alpha}$.

For the spatial discretisation we employ a finite difference scheme on an equidistant mesh $x_i=i\tau$, $\tau=a/M$, $i=-M,\ldots,M$ and use second-order finite differences within the domain and central differences to approximate the normal derivative on the boundary; see \cite{CLP2006,SLL2006}. 
For the time discretisation we employ the frequency domain version of our algorithm  with $R=30$ quadrature points for the inverse Laplace transform and polynomial degree $q=16$ for the farfield interpolation. 
We note that two different convolution quadratures are required, one for the fractional derivative in $(-a,a)$ involving $\alpha$ and one for the fractional derivative of the boundary values, involving $\alpha/2$.

For the space discretisation we consider a fixed mesh with $M=10^4$ which is fine enough to let the error of the time discretisation dominate. 
As a reference solution we take the method with order $p=3$ on a finer mesh.
\begin{figure}[ht]
\centering
\begin{subfigure}[hb]{.42\textwidth}
\centering
\begin{tikzpicture}[scale = 0.75]
\begin{loglogaxis}[
    xlabel={time step $h$},
    ylabel={$e_h$},
    grid=major,
    legend entries={$p=1$,$p=2$,$p=3$, $O(h^{2p-1})$},
    legend style={at={(1.225,0.4)}}
]
\addplot[mark=none, red!70, thick] coordinates {
(0.062500000000000, 0.021721800979836)
(0.031250000000000, 0.011122741056516)
(0.015625000000000, 0.005614388584481)
(0.007812500000000, 0.002820972605852)
(0.003906250000000, 0.001416461345902)
(0.001953125000000, 0.000550921904039)
(0.000976562500000, 0.000216172789916)
(0.000488281250000, 0.000078323566310)
(0.000244140625000, 0.000029395525566)
(0.000122070312500, 0.000015447264708)
(0.000061035156250, 0.000007910787756)
(0.000030517578125, 0.000003226179737)
};
\addplot[mark=none, green!70, thick] coordinates {
(0.062500000000000, 0.000158569890250)
(0.031250000000000, 0.000070291274707)
(0.015625000000000, 0.000009750202235)
(0.007812500000000, 0.000001277273725)
(0.003906250000000, 0.000000163025279)
(0.001953125000000, 0.000000049491687)
(0.000976562500000, 0.000000018743334)
(0.000488281250000, 0.000000008364506)
(0.000244140625000, 0.000000004112145)
(0.000122070312500, 0.000000002060748)
(0.000061035156250, 0.000000000965453)
(0.000030517578125, 0.000000000350518)
};
\addplot[mark=none, blue!70, thick] coordinates {
(0.062500000000000, 0.000128083850282)
(0.031250000000000, 0.000000933396858)
(0.015625000000000, 0.000000024614396)
(0.007812500000000, 0.000000001088977)
(0.003906250000000, 0.000000000823098)
(0.001953125000000, 0.000000000730466)
(0.000976562500000, 0.000000000632876)
(0.000488281250000, 0.000000000532636)
(0.000244140625000, 0.000000000455072)
(0.000122070312500, 0.000000000423848)
(0.000061035156250, 0.000000000362037)
(0.000030517578125, 0.000000000240015)
};
\addplot [dashed,domain=0.00003:0.06, samples=100]{1.0*x};
\addplot [dashed,domain=0.003:0.04, samples=100]{8*x^3};
\addplot [dashed,domain=0.01:0.08, samples=100]{5*x^5};
\end{loglogaxis}
\end{tikzpicture}
\label{fig:fd_convergence}
\end{subfigure}
\qquad\quad
\begin{subfigure}[hb]{.42\textwidth}
\centering
\begin{tikzpicture}[scale = 0.75]
\begin{loglogaxis}[
    xlabel={number of time steps $N$},
    ylabel={online computation time},
    grid=major,
    legend entries={$p=1$,$p=2$,$p=3$, $O(N)$},
    legend pos=north west
]
\addplot[mark=none, red!70, thick] coordinates {
(32,       0.624277)
(64,       0.813853)
(128,      1.523191)
(256,      2.819560)
(512,      5.606501)
(1024,    11.156703)
(2048,    22.758282)
(4096,    53.521360)
(8192,   104.164685)
(16384,  233.195326)
(32768,  484.743312)
(65536,  984.398470)
(131072,1376.133322)
};
\addplot[mark=none, green!70, thick] coordinates {
(32,       1.068093)
(64,       1.622303)
(128,      3.380504)
(256,      6.344614)
(512,     12.857011)
(1024,    29.605384)
(2048,    49.096401)
(4096,   105.065974)
(8192,   204.271527)
(16384,  406.256171)
(32768,  845.943655)
(65536, 1709.507510)
(131072,3523.049906)
};
\addplot[mark=none, blue!70, thick] coordinates {
(32,       2.706178)
(64,       4.246933)
(128,      9.820791)
(256,     21.951591)
(512,     41.998223)
(1024,    76.988625)
(2048,   144.428000)
(4096,   220.065101)
(8192,   380.967206)
(16384,  752.828073)
(32768, 1514.840942)
(65536, 2679.623742)
(131072,5679.396925)
};
\addplot [dashed,domain=32:131072, samples=100]{0.005*x};
%
\end{loglogaxis}
\end{tikzpicture}
\label{fig:fd_times}
\end{subfigure}
\caption{Convergence plot (left) and computation times (right) for the fractional diffusion problem with transparent boundary conditions.\label{fig:fd}}
\end{figure}
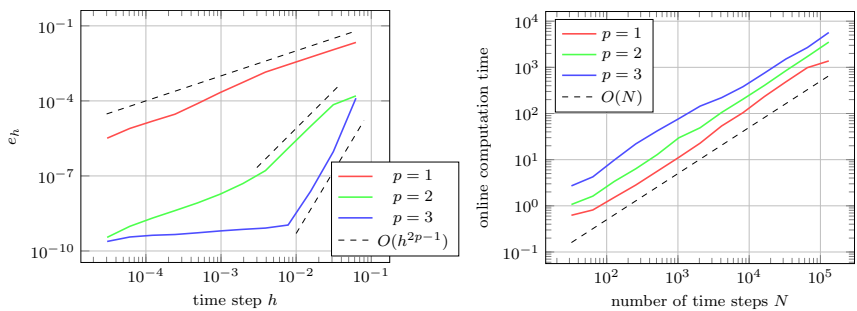
Let us note that due to the lack of temporal smoothness of the solution at $t=0$, we cannot expect the full order of convergence as predicted by \eqref{eq:lubicherror};  we refer to \cite[Section 1]{CLP2006} for further discussion. 
In \Cref{fig:fd}, we however still observe a very good approximation in the pre-asymptotic phase and a substantial improvement in accuracy until the numerical noise level is reached when using higher approximation order $p$. As predicted by the theory, the computation times again increase linearly in the number of time steps.  
In the numerical tests, identical results were obtained for direct evaluation of $k(t,s)$ and evaluation of the kernel via inverse Laplace transforms, which indicates that the main approximation error comes from the adaptive hierarchical approximation.

{\footnotesize
\section*{Acknowledgement}
The authors would like to thank the first reviewer for an exceptionally careful reading and valuable comments and remarks.

\section*{Funding}
Support by the German Science Foundation (DFG) via TRR~146 (project~C3), TRR~154 (project~C04), and SPP~2256 (project Eg-331/2-1) is gratefully acknowledged.
}

\bibliographystyle{plain} 
\bibliography{bibliography}
\end{document}